\newtheorem{theorem}{Theorem}[section]
\newtheorem{lemma}[theorem]{Lemma}
\newtheorem{proposition}[theorem]{Proposition}
\newtheorem{conjecture}[theorem]{Conjecture}
\theoremstyle{definition}
\newtheorem{remark}[theorem]{Remark}
\newcommand{\D}{\mathrm{d}}
\newcommand{\e}{\mathrm{e}}
\newcommand{\C}{\mathbb{C}}
\newcommand{\N}{\mathbb{N}}
\newcommand{\R}{\mathbb{R}}
\newcommand{\Z}{\mathbb{Z}}
\newcommand{\BB}{\mathcal{B}}
\newcommand{\OO}{\mathcal{O}}
\newcommand{\sumoplus}{\!\raisebox{1.5ex}{{\scriptsize $\oplus$}}}
\numberwithin{equation}{section}
\title[Geometry effects in quantum dot families]{Geometry effects in quantum dot families}
\dedicatory{To my friend Fritz Gestztesy on the occasion of his 70th birthday}
\author[P.~Exner]{Pavel Exner}
\address[P.~Exner]{Doppler Institute for Mathematical Physics and Applied Mathematics\\
Czech Technical University\\
B\v rehov\'a 7\\ 11519 Prague\\ Czechia\\ and Department of
Theoretical Physics\\ NPI\\ Academy of Sciences\\ 25068 \v{R}e\v{z}
near Prague, Czechia}
\email{{\tt exner@ujf.cas.cz}}
\keywords{Schr\"odinger operators, geometrically induced discrete spectrum, spectral optimisation}
\subjclass[2010]{81Q37, 35J10, 35P15}
\begin{document}


\begin{abstract}
We consider Schr\"odinger operators in $L^2(\R^\nu),\, \nu=2,3$, with the interaction in the form on an array of potential wells, each on them having rotational symmetry, arranged along a curve $\Gamma$. We prove that if $\Gamma$ is a bend or deformation of a line, being straight outside a compact, and the wells have the same arcwise distances, such an operator has a nonempty discrete spectrum. It is also shown that if $\Gamma$ is a circle, the principal eigenvalue is maximized by the arrangement in which the wells have the same angular distances. Some conjectures and open problems are also mentioned.
\end{abstract}

\maketitle

\section{Introduction}
\label{s:intr}

Spectral theory of Schr\"odinger operators -- the area to which Fritz contributed in a number of ways -- is a topic which may never be exhausted. In this paper we focus on what one could call \emph{guided quantum dynamics}, in other words, description of particle motion restricted in one dirrection but free in the other(s). Mathematically such systems are usually described either by Dirichlet Laplacians in tube- or layer-form regions, or alternatively by Schr\"odinger operators with a singular interaction supported by a manifold or complex of a lower dimension, see \cite{EK15} for a survey.

Recently another model attracted attention where, in contrast to the above mentioned operator classes, the confinement is `soft' being realized by a regular potential well built over a fixed curve, cf.~\cite{Ex20} and the subsequent work in \cite{KKK21, EL21, Ex22, EV23} as well as the results concerning the analogous problem about confinement in the vicinity of surfaces of a positive Gauss curvature \cite{EKP20, KK22}. One has to add that Schr\"odinger operators of this type were studied before -- see, e.g. \cite{To88, WT14} -- for a different purpose; the focus in those works was on the limit in which the `size' of the transverse confinement shrinks to zero.

The common feature of all the mentioned work is that the interaction is invariant with respect to shifts along the defining manifold; the potential depends on the distance from it only. This is, for instance, a natural model of semiconductor quantum wires. The present solid-state physics, however, makes it possible to fabricate many other objects, among which a prominent place belongs to \emph{quantum dots}, also called semiconductor nanocrystals \cite{Qwiki}. They often appear in arrays in which case a natural question concerns the electron transport, or the absence of it, in such systems. The simplest model one can use here features an array of potential wells, which is what we our going to investigate in this paper. Apart from the indicated physical motivation, an extension of the studies mentioned above  to the situation where a soft quantum waveguide has a nontrivial longitudinal structure represents a mathematical problem of an independent interest.

We analyze Schr\"odinger operators in $L^2(\R^\nu),\, \nu=2,3$, with the inter\-action term in the form of arrays of potentials wells, for simplicity assuming that each of those has a rotational symmetry. We derive two main results. The first concerns infinite arrays obtained by local perturbations of a straight family of equidistantly spaced wells, bends or deformations; using Birman-Schwinger analysis we show that they have a nonempty discrete spectrum (Theorem~\ref{thm:bent_bs}). Secondly, in analogy with \cite[Prop.~3.2.1. and Thm.~10.6]{EK15} and \cite{EL21} we consider the situation where the wells are arranged on a circle; using the Birman-Schwinger principle again we prove that the principal eigen\-value of such a Schr\"odinger operator is sharply maximized in the arrangement where the wells have the same angular distances (Theorem~\ref{thm:circleopt}). Before stating and proving these claims, we describe in the next section the setting of our task in proper terms. We will also outline relations between the present problem and spectral properties of Schr\"odinger operators with point interactions \cite{AGHH}, and we conclude the paper by listing two conjectures about the ground-state optimisation together with some other open problems about operators of this type.

\section{Preliminaries}
\label{s:prelim}

The setting we consider is simple. Given a $\rho>0$ and a real-valued function $V\in L^2(0,\rho)$ we define radial potential supported in an open ball $B_\rho(y)$ centered at a point $y\in\R^\nu,\, \nu=2,3$, as the map $x\mapsto V(\mathrm{dist}(x,y))$; with an abuse of notation we use for the latter the symbol $V$ again. Furthermore, we consider a family of points, $Y=\{y_i\}\subset\R^\nu$, finite or infinite, and such that the balls centered at them do not overlap, $\mathrm{dist}(y_i,y_j)\ge 2\rho$ if $i\ne j$, and denote by $V_i$ the potential determined by the function $x\mapsto V(x-y_i)$ in the ball $B_\rho(y_i)$. The object of our interest is then the Schr\"odinger operator
 \begin{equation} \label{hamilt}
H_{\lambda V,Y} = -\Delta - \lambda\sum_i V_i(x)
 \end{equation}
which is by our assumption about the function $V$ self-adjoint on $H^2(\R^\nu)$; we will use the shorthand $-\lambda V_Y$ for the potential term on the right-hand side of \eqref{hamilt}. Without repeating it further we will always restrict our attention to nontrivial situations when $V$ \emph{is nonzero}, and unless stated otherwise, we put $\lambda=1$, and as indicated above we also suppose that potential supports \emph{do not overlap}, $B_\rho(y_i) \cap B_\rho(y_i) =\emptyset$ for $i\ne j$. To visualise better the geometry of the set $Y$ we suppose that its points are distributed in specific ways over a curve $\Gamma\subset\R^\nu$, or alternatively over a surface $\Sigma\subset\R^3$. We are interested in relations between the form of $Y$ and the spectrum of $H_{V,Y}$, in particular about implications of variations of the geometry of the curve $\Gamma$.

If $Y$ consists of a single point, the position of the interaction plays no role and we use the abbreviated symbol $H_{\lambda V}$ for the operator \eqref{hamilt}. It is straightforward to check that $\sigma_\mathrm{ess}(H_{\lambda V})=[0,\infty)$ and the discrete spectrum, written as an ascending sequence $\{\epsilon_n\}$ with the multiplicity taken into account, is at most finite. In two dimensions it is nonempty provided $\int_0^\rho V(r)\,r\D r\ge 0$; for all small enough positive $\lambda$ there is a unique negative eigenvalue if and only if the integral is non-negative \cite{Si76}. In three dimension, the existence of bound states requires a critical interaction strength.

\section{Bound states in bent or locally deformed chains}
\label{s:bentchain}

In this section the set $Y$ is infinite and its points lie at a curve regarded as a continuous, piecewise $C^1$ map $\Gamma:\,\R\to\R^\nu$; without loss of generality we may suppose that the curve is unit-speed, $|\dot\Gamma|=1$, in other words, that it is parametrized by its arc length. The points of the array, which now may be denoted as $Y_\Gamma$, will be then supposed to be distributed equidistantly with respect to this variable with a spacing $a\ge 2\rho$. Note that the necessary, but in general not sufficient condition for the potential components not to overlap is $|\Gamma(s+a)-\Gamma(s)|\ge 2\rho$ for any $s\in\R$; recall that the radius of $\mathrm{supp}\,V$ is smaller than $a$ by assumption.

\subsection{The essential spectrum}
\label{ss:essential}

Consider first the geometrically trivial case case where the set $Y=Y_0$ is invariant with respect to discrete translations, i.e. the generating curve is a straight line:

 \begin{proposition} \label{prop:straightline}
Let the potentials be placed along a straight line, $\Gamma=\Gamma_0$, then $\sigma(H_{V,Y_0})\supset[0,\infty)$. If $\int_0^\rho V(r)\,r^{\nu-1}\D r\ge 0$, we have $\inf\sigma(H_{V,Y_0})<0$, and the spectrum may or may not have gaps. Their number is finite and does not exceed $\#\sigma_\mathrm{disc}(H_{V})$. This bound is saturated for the spacing $a$ large enough if $\nu=2$, in the case $\nu=3$ there may be one gap less which happens if the potential is weak, i.e. for $H_{\lambda V,Y_0}$ with $\lambda$ sufficiently small.
 \end{proposition}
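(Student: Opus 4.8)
The plan is to exploit the discrete translational invariance of $H_{V,Y_0}$ through a Floquet--Bloch decomposition. Aligning the line $\Gamma_0$ with the $x_1$-axis, the potential $V_{Y_0}$ is $a$-periodic in $x_1$ and compactly supported (radius $\rho$) in the transverse variable $x_\perp\in\R^{\nu-1}$. I would write $H_{V,Y_0}=\int^\oplus_{[-\pi/a,\pi/a)}H(\theta)\,\D\theta$, where the fibre $H(\theta)$ acts on the strip $[0,a)\times\R^{\nu-1}$ with the Bloch condition $u(x_1+a,\cdot)=\e^{\mathrm{i}\theta a}u(x_1,\cdot)$ and carries a single well centred in the cell. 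Since $V$ is compactly supported, it is a relatively compact perturbation of the free fibre $H_0(\theta)$, whose spectrum is purely essential and equals $[\theta^2,\infty)$ (the transverse continuum built on the lowest quasi-momentum level $\theta^2$). Hence $\sigma_\mathrm{ess}(H(\theta))=[\theta^2,\infty)$, and taking the union over the Brillouin zone gives $\bigcup_{\theta}[\theta^2,\infty)=[0,\infty)\subset\sigma(H_{V,Y_0})$, which is the first assertion.

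Below the threshold the spectrum is governed by the band functions. For $E<0$ one has $E<\theta^2$ for every $\theta$, so $E\in\sigma(H(\theta))$ iff $E$ is an eigenvalue of $H(\theta)$; thus $\sigma(H_{V,Y_0})\cap(-\infty,0)$ is the union of the ranges of finitely many continuous, even band functions $\epsilon_1(\theta)\le\epsilon_2(\theta)\le\cdots$, each an interval. To obtain $\inf\sigma(H_{V,Y_0})<0$ under $\int_0^\rho V(r)\,r^{\nu-1}\D r\ge0$ I would test the bottom band at $\theta=0$: the $\theta=0$ Birman--Schwinger operator $|V|^{1/2}(H_0(0)-E)^{-1}|V|^{1/2}$ is the periodization of the single-well one, and since the free resolvent kernel is positive for $E<0$ the added neighbour terms only increase it. In two dimensions this already yields a crossing of the value $1$ at some $E<0$ whenever the single well binds, i.e. for $\int_0^\rho V(r)\,r\,\D r\ge0$ by \cite{Si76}; in three dimensions the same positivity makes the periodized operator supercritical, and hence $\inf\sigma<0$, even when the isolated well is subcritical, provided $\int_0^\rho V(r)\,r^2\,\D r\ge0$.

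For the gap count I would argue that each spectral gap below the continuum can be charged to a single-well bound state. A band that is separated from the threshold, $\max_\theta\epsilon_j(\theta)<0$, is detected by the Birman--Schwinger principle: as $E\uparrow0$ the number of such bands equals the number of eigenvalues of the fibre operator pushed below a fixed negative level, which I would bound by $\#\sigma_\mathrm{disc}(H_V)$ via a comparison of the fibre Birman--Schwinger operator with the single-well one. Since $\sigma(H_{V,Y_0})\cap(-\infty,0)$ is then a union of at most $\#\sigma_\mathrm{disc}(H_V)$ intervals together with the half-line $[0,\infty)$, the number of gaps cannot exceed $\#\sigma_\mathrm{disc}(H_V)$, and it is finite. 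The main obstacle is precisely this upper bound: the comparison of Birman--Schwinger operators naturally produces \emph{lower} bounds on the band count, so controlling the number of separated bands uniformly in $\theta$ and in $a$ --- and excluding an extra band that merges with the threshold --- is the delicate point.

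Finally, saturation and the dimensional distinction follow from the large-$a$ (tight-binding) asymptotics. For $E<0$ the off-diagonal, neighbour contributions to the fibre resolvent decay like $\e^{-\sqrt{-E}\,a}$, so the $N:=\#\sigma_\mathrm{disc}(H_V)$ bands emanating from the levels $\{\epsilon_n\}$ are exponentially narrow; for $a$ large they are mutually disjoint and, in the generic case, each sits strictly below $0$, producing exactly $N$ gaps. This gives saturation for $\nu=2$. In three dimensions with a weak well the binding is a collective, threshold effect: the relevant band emerges from and reaches the threshold $0$ rather than staying below it --- a reflection of the different $E\to0^-$ behaviour of the free resolvent in dimensions two and three --- so it contributes no gap and one counts one gap fewer. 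Making this threshold analysis quantitative (zero-energy resonance versus genuine eigenvalue) is the technical heart of the last point.
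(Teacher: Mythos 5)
Your overall frame (Floquet--Bloch decomposition, band functions, tight-binding limit for large $a$) coincides with the paper's, but two of the four assertions are not actually established in your write-up. The most serious is the upper bound on the number of gaps: you propose to count separated bands by comparing the fibre Birman--Schwinger operator with the single-well one, and then you correctly observe yourself that such comparisons naturally give \emph{lower} bounds on band counts, leaving the upper bound as ``the delicate point''. That delicate point is precisely the content of the claim, so as it stands the proposal has a hole there. The paper closes it with Dirichlet--Neumann bracketing: $H_{V,a}^\mathrm{N}\le H_V(\theta)\le H_{V,a}^\mathrm{D}$ uniformly in $\theta$, so the $j$th band is squeezed between the $j$th Neumann and Dirichlet eigenvalues of the single-cell operator (with the Dirichlet bound replaced by zero when the Dirichlet eigenvalue is absent, in which case the band reaches the threshold and produces no gap); since the outer parts of these operators are positive, their negative eigenvalues are those of $H_V$ with an added N/D condition at $x_1=\pm a/2$, whose number is controlled by $\#\sigma_\mathrm{disc}(H_V)$, and monotonicity of the bracketing eigenvalues in $a$ then gives both the shrinking of the bands and the saturation statement. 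You need some substitute for this two-sided control; the one-sided Birman--Schwinger positivity will not deliver it.

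The second weak point is the proof that $\inf\sigma(H_{V,Y_0})<0$. Your argument rests on the positivity of the periodized resolvent kernel dominating the single-well Birman--Schwinger operator, but the proposition does not assume $V\ge 0$ (that hypothesis only enters in Section 3.2 of the paper); for a sign-indefinite $V$ the relevant Birman--Schwinger operator is $\mathrm{sgn}(V)|V|^{1/2}(-\Delta-E)^{-1}|V|^{1/2}$, which is not positivity-preserving, and ``the added neighbour terms only increase it'' no longer translates into a statement about the crossing of the eigenvalue $1$. Moreover, your three-dimensional claim that periodization makes the operator supercritical is true but needs the computation showing that $\sum_{n\ne 0}\e^{-\kappa|n|a}/(4\pi|n|a)$ diverges logarithmically as $\kappa\to 0$, which you do not supply. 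The paper avoids all of this with an elementary variational argument: a transverse cut-off function equal to $1$ on $\mathrm{supp}\,V$ (piecewise linear for $\nu=2$, logarithmic for $\nu=3$) whose gradient is supported away from the potential, so the kinetic contribution to the $\theta=0$ fibre form can be made arbitrarily small while the potential term is fixed and nonpositive. Your treatment of $\sigma(H_{V,Y_0})\supset[0,\infty)$ via the fibre essential spectra $[\theta^2,\infty)$ is a legitimate alternative to the paper's Weyl-sequence construction, and your tight-binding discussion of the large-$a$ regime and of the $\nu=3$ threshold effect matches the paper's intent.
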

\begin{proof}
Without loss of generality we may identify $\Gamma_0$ with the $x$ axis and $Y_0$ with the set $\{(ia,0\in\R^{\nu-1}): i\in\Z\}$. The inclusion $\sigma(H_{V,Y_0})\supset[0,\infty)$ is easy to check: one has to choose a disjoint family of increasing regions on which $H_V$ acts as Laplacian and construct a suitable Weyl sequence the elements of which are products of plane waves with appropriate mollifiers. To establish the band-gap structure of the negative part of the spectrum, we use Floquet decomposition, $H_{V,Y_0} = \int_\BB^\oplus H_V(\theta)\,\D\theta$ with $\BB=\big[\!-\!\frac{\pi}{a}, \frac{\pi}{a}\big)$, where the fiber $H_V(\theta)$ is an operator in $L^2(S_a)$, where $S_a:=J_a\times \R^{\nu-1}$ and $J_a:=\big(-\frac{a}{2}, \frac{a}{2}\big)$, acting as $H_V = -\Delta - V$ on the domain
 \begin{align} \label{fiberdom}
D(H_V(\theta)) & = \Big\{ \psi\in H^2(S_a):\: \psi\big(\textstyle{\frac{a}{2}},x_\perp\big) = \e^{i\theta} \psi\big(-\textstyle{\frac{a}{2}},x_\perp\big) \;\; \text{and} \\ & \partial_{x_1}\psi\big(\textstyle{\frac{a}{2}},x_\perp\big) = \e^{i\theta} \partial_{x_1}\psi\big(-\textstyle{\frac{a}{2}},x_\perp\big) \;\;\text{for all}\;\; x_\perp\in\R^{\nu-1}\Big\}, \nonumber
 \end{align}
where we use the notation $x=(x_1,x_\perp)$ for elements of $S_a$ and $\R^\nu$. The negative spectrum of $H_{V,Y_0}$ is nonempty if this is the case for some $H_V(\theta)$, and it is obvious that such spectral points can be only eigenvalues of $H_V(\theta)$. Each $H_V(\theta)$ is self-adjoint and associated with the quadratic form
 \begin{equation} \label{fiberform}
Q_{V,\theta}[\psi] := \int_{S_a} \big( |\nabla\psi(x)|^2 - V(x)|\psi(x)|^2\big)\, \D x
 \end{equation}
with the domain consisting of all $\psi\in H^1(S_a)$ that satisfy the first quasi-periodic condition in \eqref{fiberdom}. Using further the unitary transformation $\phi(x) = \e^{i\theta x_1/a}\psi(x)$, we can rewrite the form \eqref{fiberform} as the map
 $$ 
\phi \mapsto \int_{S_a} \Big( \big|\big(-i\partial_{x_1} -\textstyle{\frac{\theta}{a}}\big)\phi(x)\big|^2 + |\nabla_{x_\perp}\phi(x)|^2 - V(x)|\phi(x)|^2\Big)\, \D x
 $$ 
defined on $H^1(S_a)$ with periodic boundary conditions, $\phi\big(\frac{a}{2}\big) = \phi\big(\!-\!\frac{a}{2}\big)$. From here one can check that the eigenvalues of $H_V(\theta)$, if they exist, are continuous functions of $\theta$, and their ranges constitute the spectral bands. Moreover, the lower and upper band edges correspond respectively to the symmetric and antisymmetric solutions, $\psi(x) = \pm\psi(-x)$, due to the smoothness referring to the Neumann and Dirichlet condition at the slab boundary, while the bracketing argument \cite[Sec.~XIII.15]{RS} applied to \eqref{fiberform} gives the bounds
 \begin{equation} \label{bracket}
H_{V,a}^\mathrm{N} \le H_V(\theta) \le H_{V,a}^\mathrm{D},\quad \theta\in\BB,
 \end{equation}
where $H_{V,a}^\mathrm{N/D}$ are the operators acting as $-\Delta + V$ on functions of $H^2(S_a)$ satisfying the Neuman and Dirichlet conditions, respectively, at the boundary of the slab $S_a$. By minimax principle, this means that the $j$th spectral band is squeezed between the $j$th eigenvalues of the two operator provided those exist; if such an eigenvalue exists for $H_{V,a}^\mathrm{N}$ but not for $H_{V,a}^\mathrm{D}$ the upper bound is replaced by zero. Another simple application of the bracketing argument shows that the estimating eigenvalues are monotonous with respect to $a$, the lower (Neumann) being increasing with respect to $a$, the upper decreasing, so that the bands shrink as $a$ increases. Furthermore, we note that the discrete spectrum of the two operators is the same as that of $\tilde{H}_{V,a}^\mathrm{N/D}$ obtained from $H_V$ by adding the Neumann/Dirichlet condition at $x_1=\pm\frac{a}{2}$ since the `outer' part of these operators are positive. Increasing the spacing of the added conditions we arrive eventually to the same eigenvalue equation, hence the bands shrink to the eigenvalues of $H_V$ as $a\to\infty$; this yields the last claim.

To prove the sufficient condition for the existence of negative spectrum it is enough to find a trial function which makes the form $Q_{V,0}$ of \eqref{fiberform} negative. We can use, for instance, the functions
 \begin{equation} \label{trial2}
\chi_{b,c}(x) = \left\{ \begin{array}{ccl} 1 & \quad\dots\quad & |x_\perp|\le b \\ \frac{c-x_\perp}{\!c-b} & \quad\dots\quad & b \le |x_\perp| \le c \\ 0 & \quad\dots\quad & |x_\perp| \ge c \end{array} \right.
 \end{equation}
independent of $x_1$ if $\nu=2$, and
 \begin{equation} \label{trial3}
\chi_{b,c}(x) = \left\{ \begin{array}{ccl} 1 & \quad\dots\quad & |x_\perp|\le b \\ -\ln\frac{|x_\perp|}{c} \big( \ln\frac{c}{b} \big)^{-1} & \quad\dots\quad & b \le |x_\perp| \le c \\ 0 & \quad\dots\quad & |x_\perp| \ge c \end{array} \right.
 \end{equation}
if $\nu=3$. Choosing $b>\rho$ we ensure that the supports of $V$ and $\nabla\chi_{b,c}$ are disjoint. The potential term in $Q_{V,0}[\chi_{b,c}]$ equals $\inf\sigma(H_{V,Y_0)})$ being negative by assumption, and it is easy to chect that the kinetic term can be made arbitrarily small by putting $c$ sufficiently large; this concludes the proof.
\end{proof}

 \begin{remark} \label{rem:critical}
The fact that $\inf\sigma(H_{V,Y_0)})<0$ holds whenever the potential $V$ is attractive in the mean is not in contradiction with the need of critical strength to achieve $\inf\sigma(H_V)<0$ in the three-dimensional case; note that the lower edge of the spectrum indicated in the proof converges then to zero as $a\to\infty$. We also note that the spectrum is absolutely continuous, however, we will not need this property in the following.
 \end{remark}

Our aim is to find what happens with the spectrum, if $\Gamma$ is bent or locally deformed; to make things simpler we assume that the curved part is finite and the halfline asymptotes of $\Gamma$ are either not parallel, or if they are, they point in the opposite directions. Then the continuous spectrum does not change, however, we limit ourselves to the claim we will need in the following.
 \begin{proposition} \label{prop:bentline}
Suppose that the curve $\Gamma$ is straight outside a compact set and $|\Gamma(s)-\Gamma(-s)|\to\infty$ holds as $|s|\to\infty$, then $\sigma_\mathrm{ess}(H_{V,Y}) \supset \sigma(H_{V,Y_0})$ and the thresholds of the two sets coincide.
 \end{proposition}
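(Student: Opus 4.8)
The plan is to establish the equality by proving the two inclusions separately, using that the straight-line spectrum identified in Proposition~\ref{prop:straightline} is purely essential (cf.\ Remark~\ref{rem:critical}), so that $\sigma_\mathrm{ess}(H_{V,Y_0})=\sigma(H_{V,Y_0})$ consists of $[0,\infty)$ together with the negative Floquet bands.

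For the inclusion $\sigma(H_{V,Y_0})\subseteq\sigma_\mathrm{ess}(H_{V,Y})$ I would build explicit singular sequences. By hypothesis $\Gamma$ coincides with two straight rays outside a compact set, and on each of them the wells form a semi-infinite equidistant array congruent, up to a rigid motion, to a tail of $Y_0$. Given $\mu$ in a negative band I take the corresponding quasi-periodic fibre function supplied by the analysis of Proposition~\ref{prop:straightline}, modulate it by a slowly varying envelope spread over many cells so as to obtain an approximate eigenfunction of the straight-array operator, and translate this packet out along one straight ray. For translations far enough from the bend its support lies in the straight region, where $H_{V,Y}$ acts exactly as the straight-array operator; choosing the translation steps large makes the supports mutually disjoint, which yields a singular sequence and places $\mu$ in $\sigma_\mathrm{ess}(H_{V,Y})$. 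The band edges follow by continuity, and the points of $[0,\infty)$ are captured by the usual free Weyl sequences supported far from $\Gamma$.

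For the reverse inclusion I would use Dirichlet--Neumann bracketing. Choosing $R$ so large that the curved part of $\Gamma$ lies in $B_R$ and --- here the divergence hypothesis $|\Gamma(s)-\Gamma(-s)|\to\infty$ enters --- the two straight tails together with the $\rho$-neighbourhoods of their wells are mutually disjoint for $|s|>R$, I cut $\R^\nu$ by two cross-sectional hypersurfaces separating a bounded central region $\Omega_\mathrm{int}$ from two exterior regions $\Omega_\pm$, each containing one straight tail. Imposing Neumann, respectively Dirichlet, conditions on the cuts sandwiches $H_{V,Y}$ in the form sense between $H^\mathrm{N}_\mathrm{int}\oplus H^\mathrm{N}_+\oplus H^\mathrm{N}_-$ and its Dirichlet analogue. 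The interior pieces have only the transverse continuum $[0,\infty)$ in their essential spectrum, while $\Omega_\pm$ carry semi-infinite equidistant arrays whose essential spectrum coincides with the band spectrum of the full line, a semi-infinite periodic system having the same essential spectrum as the bi-infinite one, up to possible discrete edge states in the gaps. Hence both decoupled operators have essential spectrum equal to $\sigma(H_{V,Y_0})$.

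The main obstacle is that the cutting hypersurfaces are unbounded in $\R^\nu$, so the resolvent difference between $H_{V,Y}$ and the decoupled operators is \emph{not} compact and the conclusion cannot simply be read off from a compact perturbation. To close the argument I would exploit that only the negative part of the spectrum is at issue, the interval $[0,\infty)$ being already accounted for, and that generalized eigenfunctions at a fixed energy $\mu<0$ decay exponentially in the transverse direction by a Combes--Thomas estimate; this makes the negative problem effectively quasi-one-dimensional, the cuts behaving as if they had bounded cross-section, so that the bracketing inclusions become genuine and force $\sigma_\mathrm{ess}(H_{V,Y})=\sigma(H_{V,Y_0})$. Equivalently one may localise with cutoffs whose transverse transition region is spread over a large width $W$, so that the IMS error terms are $O(W^{-2})$ and vanish in the limit.
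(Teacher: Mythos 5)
Your first inclusion, $\sigma(H_{V,Y_0})\subset\sigma_\mathrm{ess}(H_{V,Y})$, is essentially the paper's argument: translate quasi-periodic wave packets (fibre eigenfunctions times slowly varying envelopes with disjoint supports) far out along a straight tail, where $H_{V,Y}$ is indistinguishable from the straight-array operator, and handle $[0,\infty)$ by free Weyl sequences away from $\Gamma$. That part is fine.

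The reverse inclusion is where your route diverges from the paper's, and as written it has a genuine gap. Dirichlet--Neumann bracketing, $H^\mathrm{N}_\mathrm{dec}\le H_{V,Y}\le H^\mathrm{D}_\mathrm{dec}$, only controls min--max values, hence at best $\inf\sigma_\mathrm{ess}$ and the eigenvalues below it; it cannot exclude new essential spectrum appearing \emph{inside the gaps} of the band structure, which is precisely what the proposition asserts. So even if the cuts were compact and the decoupled essential spectra were known, the sandwich would not deliver $\sigma_\mathrm{ess}(H_{V,Y})\subset\sigma(H_{V,Y_0})$. Your Combes--Thomas remark does not repair this: transverse decay of generalized eigenfunctions at negative energy does not turn an operator inequality into a statement about gap structure. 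Your final sentence -- IMS localization with transition width $W$ and $\OO(W^{-2})$ errors -- is in fact a different (and workable) argument: one localizes an arbitrary singular sequence for $\mu\in\sigma_\mathrm{ess}(H_{V,Y})$ into the three regions, shows at least one localized piece is a singular sequence for the corresponding decoupled operator up to $\OO(W^{-1})$, and lets $W\to\infty$ using closedness of the essential spectrum; but then you still owe a proof that the half-space operators with a semi-infinite array have essential spectrum equal to the full-line band spectrum (your ``up to discrete edge states'' is true but is itself a Weyl/Persson-type argument of comparable substance). The paper avoids all of this by working directly with the singular sequence for a putative $\mu\in\sigma_\mathrm{ess}(H_{V,Y})\setminus\sigma_\mathrm{ess}(H_{V,Y_0})$: since $\mu<0$, the sequence must concentrate near $\mathrm{supp}\,V_Y$; weak convergence to zero pushes it away from the bent part; the non-parallel-asymptote hypothesis lets one split it between the two arms; and translating one arm's piece back produces a singular sequence for the straight array, a contradiction. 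If you want to keep your structure, drop the bracketing framing entirely and carry out the IMS/localization argument, including the identification of the essential spectrum of the half-space pieces.
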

\begin{proof}
The inclusion $\sigma(H_{V,Y})\supset[0,\infty)$ is checked as in the previous case. To prove that also the negative part of the straight array essential spectrum is preserved, we use again the Weyl criterion by which $\mu$ belongs to $\sigma(H_{V,Y})$ if and only if there is a sequence $\{\psi_n\}\subset D(H_{V,Y})$ such that
 \begin{equation} \label{Weyl3}
\lim_{n\to\infty}\| (H_{V,Y}-\mu)\psi_n\| = 0.
 \end{equation}
For $H_{V,Y_0}$ such a sequence can be constructed explicitly, its elements being products of the generalized eigenfunction of $H_{V,Y_0}$ corresponding to the eigenvalue $\mu$ of $H_V(\theta)$ for an appropriate $\theta\in\BB$ with suitable mollifiers in the $x_1$ variable; without loss of generality the latter can be chosen to have disjoint supports. This can be used to construct a Weyl sequence for $H_{V,Y}$ in the form $\{\psi_n\chi_n\}$ where $\chi_n$ are transverse mollifiers of the type \eqref{trial2} or \eqref{trial3} for $\nu=2,3$, respectively. By assumption the radius of $\mathrm{supp}\,\chi_n$ can be made arbitrarily large, and consequently, this mollifier influence on $\| (H_{V,Y}-\mu)\psi_n\|$ arbitrarily small if the longitudinal mollifier is supported far enough from the curved part of $\Gamma$; this yields the inclusion $\sigma(H_{V,Y_0})\subset \sigma(H_{V,Y})$, and as the supports of $\psi_n$'s are disjoint, we also have $\sigma(H_{V,Y_0})\subset \sigma_\mathrm{ess}(H_{V,Y})$.

In order to prove the remaining claim, we divide the plane into four closed regions, $\Sigma_+ \cup \Sigma_- \cup \Sigma_0 \cup \Sigma_\mathrm{ext}$. The first two are semiinfinte rectangular strips of width $L$, the axes of which coincide with the straight parts of $\Gamma$; the perpendicular boundary of each of them passes through some of the points $y_i$. The third one is a polygon, to which $\Sigma_\pm$ are attached, and $\Sigma_\mathrm{ext}$ is the rest of the plane. Imposing Neumann conditions on the common boundaries of the regions, we get by \cite[Sec.~XIII.15]{RS} an operator estimating $H_{V,Y}$ from below. Its part in $\Sigma_0$ is finite, thus irrelevant from the viewpoint of the essential spectrum. Likewise, the part in $\Sigma_\mathrm{ext}$ is a positive operator which tells us nothing about $\inf\sigma_\mathrm{ess}(H_{V,Y})$. Since the `lids' of $\Sigma_\pm$ are Neumann, the spectral threshold of these operator parts coincide with that of the straight array confined symmetrically to a Neumann strip of width $L$ and that one, \emph{mutatis mutandis}, in analogy with \eqref{bracket} is the ground state eigenvalue of $H_{V,a}^{\mathrm{N},L}$, the restriction of our operator to the rectangle $S_{a,L}=\big(-\frac{a}{2}, \frac{a}{2}\big)\times \big(-\frac{L}{2}, \frac{L}{2}\big)$ with Neumann boundary.

What is important, the plane decomposition can be chosen to have $L$ arbitrarily large. It may require to make $\Sigma_0$ appropriately large, but it does not matter as long as it is finite; the task then reduces to checking that $\lim_{L\to\infty} \inf\sigma(H_{V,a}^{\mathrm{N},L}) = \inf\sigma(H_{V,a}^\mathrm{N})$. Such limits for Neumann Schr\"odinger operators have been studied only in one dimension or for domains of particular shapes \cite{DH93}, however, the rectangular shape of the support makes it possible to check the indicated limit directly. Passing from the coordinate $y$ in $S_{a,L}$ to $u:= \frac{2L}{\pi}\,\tan\frac{\pi y}{2L}$, we see that the problem is equivalent to finding the ground-state eigenvalue of the operator
 $$
\tilde{H}_{V,a}^{\mathrm{N},L} := -\frac{\D^2}{\D x^2} - \Big(1+\Big(\frac{\pi u}{2L}\Big)^2\Big) \frac{\D}{\D u} \Big(1+\Big(\frac{\pi u}{2L}\Big)^2\Big) \frac{\D}{\D u} + V\Big(x,\frac{2L}{\pi}\,\tan\frac{\pi y}{2L}\Big)
 $$
on $S_a$ in the limit $L\to\infty$. We can treat it as a perturbation of $H_{V,a}^\mathrm{N}$ since for $V=0$ the relation $\tilde{H}_{0,a}^{\mathrm{N},L} \to H_{0,a}^\mathrm{N}$ in the generalized strong resolvent sense \cite{We84} is established directly and the potential perturbation is relatively bounded; this yields the desired result.
\end{proof}

\subsection{The discrete spectrum}
\label{ss:geom_bs}

Now we are going to suppose that the potential is \emph{purely attractive}, $V\ge 0$, and show that geometric perturbations do then give rise to a noempty spectrum below $\mu_0:= \inf\sigma(H_{V,Y})$. We will employ the Birman-Schwinger principle; for a rich bibliography concerning this remarkable tool we refer to \cite{BEG22}. To this aim we define for any $z\in\C\setminus\R_+$ the operator in $L^2(\R^\nu)$,
 \begin{equation} \label{BSop}
K_{V,Y}(z) := V_Y^{1/2} (-\Delta-z)^{-1} V_Y^{1/2}\,;
 \end{equation}
we are particularly interested in the negative spectral parameter value, $z=-\kappa^2$ with $\kappa>0$. In view of our assumptions about the potential the nontrivial part of $K_{V,Y}(-\kappa^2)$ is positive and maps $L^2(\mathrm{supp}\,V_Y) \to L^2(\mathrm{supp}\,V_Y)$. Since the supports of the potentials $V_i$ are disjoint by assumption, we have
 $$ 
L^2(\mathrm{supp}\,V_Y) = \sum_i\sumoplus\: L^2(B_\rho(y_i))
 $$ 
and using this orthogonal sum decomposition we can write the Birman-Schwinger operator \eqref{BSop} in the `matrix' form with the `entries'
 \begin{equation} \label{BSmatrix}
K_{V,Y}^{(i,j)}(-\kappa^2) := V_i^{1/2} (-\Delta+\kappa^2)^{-1} V_j^{1/2}
 \end{equation}
mapping $L^2(B_\rho(y_j))$ to $L^2(B_\rho(y_i))$. The Birman-Schwinger principle allows us to determine eigenvalues of $H_{V,Y}$ by inspection of those of $K_{V,Y}(-\kappa^2)$:
\begin{proposition} \label{prop:BS}
$z\in\sigma_\mathrm{disc}(H_{V,Y})$ holds if and only if $\,1\in\sigma_\mathrm{disc}(K_{V,Y}(z))$ and the dimensions of the corresponding eigenspaces coincide. The
operator $K_{V,Y}(-\kappa^2)$ is bounded for any $\kappa>0$ and the function $\kappa \mapsto K_{V,Y}(-\kappa^2)$ is continuously decreasing in $(0,\infty)$ with $\lim_{\kappa\to\infty}\|K_{V,Y}(-\kappa^2)\|=0$.
\end{proposition}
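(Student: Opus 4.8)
The plan is to derive the equivalence from the standard Birman--Schwinger substitution and to read off the analytic properties of $K_{V,Y}(-\kappa^2)$ from those of the free resolvent $(-\Delta+\kappa^2)^{-1}$; the only point requiring care is that, since $V\in L^2$ need not be bounded, the multiplication operator $V_Y^{1/2}$ is unbounded, so every resolvent manipulation has to be anchored to a boundedness estimate rather than taken for granted.

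For the equivalence I would argue as follows. If $\psi$ is an eigenfunction of $H_{V,Y}$ for $z\in\C\setminus\R_+$, then $(-\Delta-z)\psi=V_Y\psi$, and setting $\phi:=V_Y^{1/2}\psi$ and applying $V_Y^{1/2}(-\Delta-z)^{-1}$ to this identity gives $K_{V,Y}(z)\phi=\phi$; here $\phi\neq 0$ because $V_Y\psi=0$ together with $z\notin\sigma(-\Delta)$ would force $\psi=0$. Conversely, given $K_{V,Y}(z)\phi=\phi$ with $\phi\neq 0$, I put $\psi:=(-\Delta-z)^{-1}V_Y^{1/2}\phi$ and check that $V_Y^{1/2}\psi=K_{V,Y}(z)\phi=\phi$, so that $\psi\neq 0$, and that $(-\Delta-z)\psi=V_Y^{1/2}\phi=V_Y\psi$, i.e. $H_{V,Y}\psi=z\psi$. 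The maps $\psi\mapsto V_Y^{1/2}\psi$ and $\phi\mapsto(-\Delta-z)^{-1}V_Y^{1/2}\phi$ are mutually inverse linear bijections between $\ker(H_{V,Y}-z)$ and $\ker(K_{V,Y}(z)-I)$, whence the eigenspace dimensions agree.

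For the operator properties I would set $A_\kappa:=V_Y^{1/2}(-\Delta+\kappa^2)^{-1/2}$, so that $K_{V,Y}(-\kappa^2)=A_\kappa A_\kappa^*$ and $\|K_{V,Y}(-\kappa^2)\|=\|A_\kappa\|^2$. Boundedness and decay both follow from the infinitesimal form-boundedness of $V_Y$: since $V\in L^2$ and $\nu\le 3$, for every $\epsilon>0$ there is $C_\epsilon$ with $\int V_Y|\psi|^2\le\epsilon\|\nabla\psi\|^2+C_\epsilon\|\psi\|^2$, the common shape of the wells and the disjointness of their supports making the constants uniform over the (possibly infinite) array. Writing $\psi=(-\Delta+\kappa^2)^{-1/2}\phi$ one gets $\|A_\kappa\phi\|^2=\int V_Y|\psi|^2\le\max(\epsilon,C_\epsilon\kappa^{-2})\,\|\phi\|^2$, hence $\|K_{V,Y}(-\kappa^2)\|\le\max(\epsilon,C_\epsilon\kappa^{-2})$; sending $\kappa\to\infty$ and then $\epsilon\to0$ yields the stated limit. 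The monotonicity is immediate from the operator monotonicity of $\kappa\mapsto(-\Delta+\kappa^2)^{-1}$ given by the spectral theorem, since conjugation by $V_Y^{1/2}$ preserves the order; and continuity follows from the factorized resolvent identity
\[
K_{V,Y}(-\kappa_1^2)-K_{V,Y}(-\kappa_2^2)=(\kappa_2^2-\kappa_1^2)\,A_{\kappa_1}(-\Delta+\kappa_1^2)^{-1/2}(-\Delta+\kappa_2^2)^{-1/2}A_{\kappa_2}^*,
\]
whose norm is at most $|\kappa_2^2-\kappa_1^2|\,\|A_{\kappa_1}\|\,\|A_{\kappa_2}\|/(\kappa_1\kappa_2)$.

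The step I expect to be the real obstacle is controlling the unbounded factor $V_Y^{1/2}$: the resolvent algebra above is formally trivial but is legitimate only once one knows $\|A_\kappa\|^2=\|K_{V,Y}(-\kappa^2)\|<\infty$, and proving the infinitesimal form bound uniformly over an infinite array---through a local estimate $\int_{B_\rho(y_i)}V_i|\psi|^2\le\|V\|_{L^2}\|\psi\|_{L^4(B_\rho(y_i))}^2$ combined with the Sobolev embedding $H^1\hookrightarrow L^4$ valid for $\nu\le3$, summed over the disjoint balls---is where the substantive work sits.
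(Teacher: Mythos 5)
Your proposal is correct, but it reaches the conclusion by a genuinely different route from the paper. For the equivalence, the paper simply cites the literature (Bulla--Gesztesy--Renger--Simon), whereas you reproduce the standard substitution $\phi=V_Y^{1/2}\psi$ and exhibit the mutually inverse bijections between kernels; that is fine, with the usual caveats that the manipulations with the unbounded factor $V_Y^{1/2}$ should be anchored in the form-factorization (Konno--Kuroda) framework -- which you explicitly flag -- and that matching \emph{eigenvalues} with multiplicities is slightly weaker than matching points of the \emph{discrete} spectrum (isolation has to come from the continuity and monotonicity in $\kappa$, or from the cited general result). For the analytic properties the divergence is more substantial: the paper works with the explicit resolvent kernels ($\frac{1}{2\pi}K_0(\kappa|x-x'|)$ and $\frac{\e^{-\kappa|x-x'|}}{4\pi|x-x'|}$), shows each block $K^{(i,j)}_{V,Y}(-\kappa^2)$ is Hilbert--Schmidt uniformly in $i,j$, and gets the $\kappa\to\infty$ limit from dominated convergence on the diagonal plus exponential off-diagonal decay; you instead factorize $K_{V,Y}(-\kappa^2)=A_\kappa A_\kappa^*$ and derive everything from a uniform infinitesimal form bound, obtaining the clean estimate $\|K_{V,Y}(-\kappa^2)\|\le\max(\epsilon,C_\epsilon\kappa^{-2})$ that settles boundedness and the vanishing limit in one stroke, with continuity from the factorized resolvent identity and monotonicity from operator monotonicity of the resolvent. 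Your approach is softer and arguably cleaner for the proposition as stated; what the paper's kernel-based approach buys in exchange is compactness of the individual blocks and quantitative off-diagonal decay of the resolvent kernel, both of which are reused later (Remark~\ref{rem:BSessential} and the decay estimates in the proof of Theorem~\ref{thm:bent_bs}), so the extra work there is not wasted.
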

\begin{proof}
The first claim is a particular case of a more general and commonly known result, see, e.g., \cite{BGRS97}. Using the explicit form of $(-\Delta-z)^{-1}$ as the integral operator with the kernel $(x,x')\mapsto \frac{1}{2\pi} K_0(\kappa|x-x'|)$ and $\frac{\e^{-\kappa|x-x'|}}{4\pi|x-x'|}$ for $\nu=2,3$, respectively, we can check that $K_{V,Y}(-\kappa^2)$ is bounded if $V\in L^2$. Using Sobolev inequality \cite[Sec.~IX.4]{RS} we infer that each $K_{V,Y}^{(i,j)}(-\kappa^2)$ has a finite Hilbert-Schmidt norm, uniformly in $i,j$, if $\nu=3$. To make the same conclusion for $\nu=2$ one has to use in addition the fact that $|K_0(\kappa r)| \le cr^{-1}$ holds on $[0,2\rho]$ for a fixed $\kappa>0$ and some $c>0$. The operator $K_{V,Y}(-\kappa^2) = \sum_{i,j} K_{V,Y}^{(i,j)}(-\kappa^2)$ is no longer compact, of course, but due to the uniformity the boundedness persists.

The continuity in $\kappa$ follows from the functional calculus and  we have
 $$ 
\frac{\D}{\D\kappa} (\psi,V_Y^{1/2} (-\Delta+\kappa^2)^{-1}\, V_Y^{1/2}\psi) = -2\kappa (\psi,V_Y^{1/2} (-\Delta+\kappa^2)^{-2}\, V_Y^{1/2}\psi) < 0
 $$ 
for any nonzero $\psi\in L^2(\mathrm{supp}\,V_Y)$ which implies, in particular, the norm monotonicity. It follows from the dominated convergence theorem that $\lim_{\kappa\to\infty} \|K_{V,Y}^{(i,i)}(-\kappa^2)\|_2=0$ holds for the `diagonal' operators, uniformly in $i$. Using further the fact that $\mathrm{dist}(y_i,y_j) \ge \delta:= a-2\rho>0,\: i\ne j$, we get
 $$ 
\|K_{V,Y}^{(i,j)}(-\kappa^2)\| \le \|K_{V,Y}^{(i,j)}(-\kappa^2)\|_2 \le \frac{\e^{-\kappa\delta}}{4\pi\delta}\, \|K_{V,Y}^{(i,i)}(-\kappa^2)\|_2
 $$ 
for the `non-diagonal' opertors if $\nu=3$, and a similar estimate with the right-hand side factor replaced by $\frac{1}{2\pi}K_0(\kappa\delta)$ if $\nu=2$.
\end{proof}

 \begin{remark} \label{rem:BSessential}
Applying the Birman-Schwinger principle to the fiber operators in the decomposition $H_{V,Y_0} = \int_\BB^\oplus H_V(\theta)\,\D\theta$ one can check that the spectrum of $K_{V,Y_0}^{(i,j)}(-\kappa^2)$ has the band-gap structure and the function $\kappa\mapsto \sup\sigma(K_{V,Y_0}(-\kappa^2))$ is decreasing being equal to one at $\kappa_0=\sqrt{-\epsilon_0}$. By Proposition~\ref{prop:bentline} the essential spectrum threshold is preserved by the considered geometric perturbations, hence the  function $\kappa\mapsto \sup\sigma_\mathrm{ess}(K_{V,Y}(-\kappa^2))$ has the same properties; note that one can apply the BS principle to the essential spectrum directly using the spectral shift function \cite{Pu11}.
 \end{remark}

Now we are in position to state the main result of this section:
\begin{theorem} \label{thm:bent_bs}
Assume that $\Gamma$ satisfying the assumptions of Proposition~\ref{prop:bentline} is not a straight line and $V\ge 0$, then $\inf\sigma(H_{V,Y})<\epsilon_0$, and consequently, we have $\sigma_\mathrm{disc}(H_{V,Y})\ne\emptyset$.
\end{theorem}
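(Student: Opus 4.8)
The plan is to deduce the theorem from the Birman--Schwinger correspondence of Proposition~\ref{prop:BS} together with the threshold information collected in Remark~\ref{rem:BSessential}. Since by Proposition~\ref{prop:bentline} the bending leaves the essential spectrum intact, that remark gives $\sup\sigma_\mathrm{ess}(K_{V,Y}(-\kappa_0^2))=1$ at $\kappa_0=\sqrt{-\epsilon_0}$. I claim it is enough to prove the strict inequality $\sup\sigma(K_{V,Y}(-\kappa_0^2))>1$. Granting it, $K_{V,Y}(-\kappa_0^2)$ possesses an eigenvalue above the top of its essential spectrum; writing $\mu(\kappa)$ for its largest eigenvalue, Proposition~\ref{prop:BS} shows $\mu$ to be continuous, strictly decreasing, and vanishing as $\kappa\to\infty$, so from $\mu(\kappa_0)>1$ there is a unique $\kappa_1>\kappa_0$ with $\mu(\kappa_1)=1$. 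For $\kappa>\kappa_0$ the top of the essential spectrum lies strictly below $1$, hence $1$ is an isolated eigenvalue of $K_{V,Y}(-\kappa_1^2)$, and Proposition~\ref{prop:BS} places $-\kappa_1^2<-\kappa_0^2=\epsilon_0$ in $\sigma_\mathrm{disc}(H_{V,Y})$, which is exactly the assertion.

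To produce a state on which $K_{V,Y}(-\kappa_0^2)$ exceeds $1$ I would use the band-edge Bloch mode. The fibre analysis in the proof of Proposition~\ref{prop:straightline} identifies $\epsilon_0$ with the bottom of the lowest band, attained at $\theta=0$ by the symmetric nodeless solution; in the notation of \eqref{BSmatrix} this means that $\sum_k K_{V,Y_0}^{(0,k)}(-\kappa_0^2)$ has largest eigenvalue $1$ with a strictly positive eigenvector $u_0$ (Perron--Frobenius, the kernels being positive), normalised so that $\|u_0\|=1$. Transplanting $u_0$ to every well with the fixed original orientation and modulating it by a nonnegative envelope $\{f_i\}$ indexed by arc length, set $\phi=\sum_i f_i\,u_0(\cdot-y_i)$. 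With $m(k):=(u_0,K_{V,Y_0}^{(0,k)}(-\kappa_0^2)u_0)$ one has $\sum_k m(k)=1$, and $\hat m(\theta):=\sum_k m(k)\,\e^{i\theta k}$ satisfies $\hat m\le1$ with $\hat m(0)=1$, so the straight-chain form $(\phi,K_{V,Y_0}(-\kappa_0^2)\phi)-\|\phi\|^2=\int_\BB(\hat m(\theta)-1)|\hat f(\theta)|^2\,\D\theta$ is nonpositive and dominated in modulus by a constant times the discrete Dirichlet energy $\sum_i|f_{i+1}-f_i|^2$. Choosing $f$ equal to one on a long block containing the bent region and ramping linearly to zero makes this ``kinetic cost'' as small as desired.

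The advantage of the bent geometry is carried by the off-diagonal terms. Writing $w:=V^{1/2}u_0\ge0$ and $F(d):=\iint w(x)\,G_{\kappa_0}(|x-x'+d|)\,w(x')\,\D x\,\D x'$, where $G_{\kappa_0}$ is the positive Green kernel appearing in Proposition~\ref{prop:BS}, the difference of forms equals $\sum_{i,j}f_if_j\big[F(y_i-y_j)-F(y_i^0-y_j^0)\big]$, the $y_i^0$ being the positions on the straight line. The geometric input is that, $\Gamma$ being unit-speed, every chord is at most as long as the corresponding arc, $|y_i-y_j|\le|i-j|a=|y_i^0-y_j^0|$, strictly shorter whenever the connecting arc is curved. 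The main obstacle is to convert this length comparison into $F(y_i-y_j)\ge F(y_i^0-y_j^0)$: since bending alters the \emph{direction} of the displacement and not only its length, and since $u_0$ (hence $w$) is only axially, not fully radially, symmetric, one cannot invoke monotonicity of $G_{\kappa_0}$ termwise. The favourable sign should nonetheless persist, because by the axial and reflection symmetry of $w$ the gradient of $F$ at the on-axis point $y_i^0-y_j^0$ is longitudinal, so a shortened chord decreases the longitudinal separation while a transverse tilt enters only at second order; quantifying this uniformly---e.g.\ through the representation $F=(w\star\tilde w)\star G_{\kappa_0}$ and the radial monotonicity of the resulting form factor beyond the well separation---is the technical heart of the matter, and the borderline spacing $a=2\rho$ has to be excluded or handled separately.

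Once the gain is shown to be nonnegative it is in fact bounded below by a fixed $\delta>0$, since $f\equiv1$ over the bend and at least the nearest-neighbour chords there are strictly shortened, independently of how widely the envelope is spread. Combining the two estimates gives $(\phi,K_{V,Y}(-\kappa_0^2)\phi)-\|\phi\|^2\ge\delta-C\sum_i|f_{i+1}-f_i|^2$, which is strictly positive as soon as the envelope is spread widely enough; the decisive point is that this envelope lives on the one-dimensional arc-length axis for both $\nu=2$ and $\nu=3$, so that its Dirichlet energy can always be driven to zero. Dividing by $\|\phi\|^2$ yields a Rayleigh quotient larger than $1$, hence $\sup\sigma(K_{V,Y}(-\kappa_0^2))>1$, and the reduction of the first paragraph completes the proof.
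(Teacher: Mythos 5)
Your overall architecture coincides with the paper's: the Birman--Schwinger reduction in your first paragraph is exactly the one used there, and your trial state built from the threshold Bloch bead modulated by a slowly varying envelope is the analogue of the paper's $h_n\phi_0^Y$ with the flat mollifiers \eqref{BSmollif}, your discrete Dirichlet-energy bound on the cut-off cost playing the role of Lemma~\ref{l:BSmollif}. The problem is that you stop at precisely the step that constitutes the actual content of the proof. The inequality you yourself call ``the technical heart of the matter'', namely $F(y_i-y_j)\ge F(y_i^{(0)}-y_j^{(0)})$ --- relation \eqref{BStrial2} of the paper --- is left unproved, and the heuristic you offer for it (the gradient of $F$ at an on-axis point is longitudinal, so a transverse tilt enters only at second order) is a statement about infinitesimal deformations; it cannot yield the inequality for an arbitrary bend, where $y_i-y_j$ may point far from the original axis and be much shorter than $|i-j|a$. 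Without this step the uniform gain $\delta>0$ in your final paragraph has no basis, so the proposal as written does not prove the theorem.

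For comparison, the paper closes this gap as follows. Each bead is first rotated so that its symmetry axis agrees with the tangent of $\Gamma$ at the corresponding centre (you keep the original orientation instead); since $\phi_{0,i}$ is even along the chain axis and rotationally symmetric about it, the weight $V^{1/2}\phi_{0,i}$ is invariant under the inversion $\xi\mapsto-\xi$ through the ball centre. This permits symmetrizing the integrand over $\pm(\xi-\xi')$, after which the integration is split into components longitudinal and transversal to $y_i-y_j$ and one observes that $u\mapsto G_{i\kappa}(|y_i-y_j|+u)-G_{i\kappa}(|y_i^{(0)}-y_j^{(0)}|+u)$ is convex whenever $|y_i-y_j|<|y_i^{(0)}-y_j^{(0)}|$; Jensen's inequality then bounds the symmetrized bracket from below by the centre-to-centre difference $G_{i\kappa}(|y_i-y_j|)-G_{i\kappa}(|y_i^{(0)}-y_j^{(0)}|)>0$. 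This is the device that converts the chord-shortening $|y_i-y_j|\le|i-j|a$ into positivity of the off-diagonal gain, and it is exactly what your argument is missing. A minor further slip: for a bend whose corner sits at one of the points $y_i$ the nearest-neighbour chords are \emph{not} shortened; one only knows that some pair with $|i-j|\ge 2$ is, which still suffices for the strict inequality but not as you stated it.
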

\begin{proof}
In view of Proposition~\ref{prop:BS} we have to show that there is a $\kappa>\sqrt{-\epsilon_0}$ such that $K_{V,Y}(-\kappa^2)$ has eigenvalue one. By Remark~\ref{rem:BSessential} such a spectral point can be an eigenvalue of finite multiplicity only, and Proposition~\ref{prop:BS} tells us that any such eigenvalue is a decreasing function of $\kappa$ which tends to zero as $\kappa\to\infty$. To prove the theorem, it is thus sufficient to check that $\sup\sigma(K_{V,Y}(-\kappa_0^2)) > 1 =\sup\sigma_\mathrm{ess}(K_{V,Y}(-\kappa_0^2))$. Note that by Proposition~\ref{prop:BS} and minimax principle the function $\kappa\mapsto \sup\sigma_\mathrm{ess}(K_{V,Y}(-\kappa^2)$ is continuously decreasing as well. One cannot exclude situations when an eigenvalue branch will be absorbed in the continuum as $\kappa$ increases, however, in view of the indicated monotonicity it must first cross level one.

We are going to use the fact that our geometric perturbations are sign-definite -- in the mean sense -- and construct a trial function $\psi$ such that
 \begin{equation} \label{BStrial}
(\phi, K_{V,Y}(-\kappa_0^2)\phi) - \|\phi\|^2 >0.
 \end{equation}
The first expression on the right-hand side can be rewritten as
 $$ 
\int_{\R^\nu\times\R^\nu}  \overline\phi(x) V_Y^{1/2}(x) (-\Delta+\kappa_0^2)^{-1}(x,x') V_Y^{1/2}(x')\phi(x')\,\D x\,\D x',
 $$ 
or more explicitly using the operators \eqref{BSmatrix} as
 $$ 
\sum_{i,j\in\Z} \int_{B_\rho(y_i)\times B_\rho(y_j)}  \overline\phi(x) V_i^{1/2}(x) (-\Delta+\kappa_0^2)^{-1}(x,x') V_j^{1/2}(x')\phi(x')\,\D x\,\D x'.
 $$ 
Denote now by $\phi_0$ the generalized eigenfunction of $K_{V,Y}(-\kappa_0^2)$ corresponding to the spectral threshold of the straight chain $Y_0$; as this function  is the product of the corresponding generalized eigenfunction of $H_{V,Y_0}$ and $V_Y^{1/2}$, it is periodic and without loss of generality we may suppose that it is real-valued and positive. What matters are the restrictions of $\phi_0$ to the balls supporting the potential, $\phi_{0,i}= \phi_0 \upharpoonright B_\rho(y_i)$, which are shifted copies of the same function, $\phi_{0,i}(\xi)=\phi_0(\xi+y_i)$ for $\xi\in B_\rho(0)$. Recall that we identified $Y_0$ with the set $\{(ia,0\in\R^{\nu-1}): i\in\Z\}$, then functions $\phi_{0,i}$ are even with respect to the ball centers in the direction of the chain axis, and have rotational symmetry with respect to it (for $\nu=2$ this means being even also transversally), in other words, $\phi_{0,i}(-\xi)=\phi_{0,i}(\xi)$ holds for $\xi\in B_\rho(0)$.

As it is common in such situations \cite{Ex20}, we use the function $\phi_0$ as the starting point for construction of the sought trial function. Using it we construct for a given $Y$ the functions $\phi^Y_0$ as an `array of beads': its values in $B_\rho(y_i)$ would coincide with $\phi_{0,i}$ rotated in such a way that the axis of $\phi_{0,i}$ agrees with the tangent to $\Gamma$ at the point $y_i$; for $Y=Y_0$ we drop the superscript $Y$.
To make such a function an $L^2$ element, we need a suitable family of mollifiers; we choose it in the form
 \begin{equation} \label{BSmollif}
h_n(x) = \frac{1}{2n+1}\,\chi_{M_n}(x),\quad n\in\N.
 \end{equation}
where $M_n:=\{x:\: \mathrm{dist}(x,\Gamma\upharpoonright [-(2n+1)a/2,(2n+1)a/2])\le\rho\}$ is a $2\rho$-wide closed tubular neighborhood of the $(2n+1)a$-long arc of $\Gamma$. We have to ensure that the positive contribution from such a cut-off can be made arbitrarily small. This is indeed the case:
\begin{lemma} \label{l:BSmollif}
$(h_n\phi_0^Y, K_{V,Y}(-\kappa_0^2)h_n\phi_0^Y) - \|h_n\phi_0^Y\|^2=\OO(n^{-1})$ as $n\to\infty$.
\end{lemma}
\begin{proof}
Since $\phi_0$ is periodic along the chain, one obtains for the second term the following expression,
 $$ 
\|h_n\phi_0\|^2 = \frac{1}{(2n+1)^2} \int_{B_\rho(0)} |\phi_0(x)|^2\,\D x.
 $$ 
Using the fact that the function \eqref{BSmollif} is constant on its support, we get
 \begin{align*}
& (h_n\phi_0^Y, K_{V,Y}(-\kappa_0^2)h_n\phi_0^Y) = \frac{1}{(2n+1)^2} \sum_{|i|\le n} \sum_{|j|\le n} \int_{B_\rho(y_i)} \D x\, \phi_{0,i}^Y(x) \\[.15em]
& \qquad\qquad \times \int_{B_\rho(y_j)} V_Y^{1/2}(x)\, (-\Delta+\kappa_0^2)^{-1}(x,x')\, V_Y^{1/2}(x') \phi_{0,i}^Y(x')\,\D x' \\[.3em]
& \quad = \frac{1}{(2n+1)^2} \sum_{|i|\le n} \sum_{|j|\le n} \int_{B_\rho(0)} \D\xi\, \phi_{0}(\xi) \\[.15em]
& \qquad\qquad \times \int_{B_\rho(0)} V^{1/2}(\xi)\, (-\Delta+\kappa_0^2)^{-1}(\xi,\xi'+y_j-y_i)\, V^{1/2}(\xi') \phi_{0}(\xi')\,\D\xi'
 \end{align*}
By assumption, $\phi_0$ -- now understood as a periodic function on $\R$ -- is the generalized eigenfunction of $K_{V,Y_0}(-\kappa_0^2)$ corresponding to the spectral threshold; this makes it possible to rewrite the right-hand side of the last relation as
 \begin{align*}
& \frac{1}{(2n+1)^2} \int_{B_\rho(0)} |\phi_0(x)|^2\,\D x - \frac{1}{(2n+1)^2} \sum_{|i|\le n} \int_{B_\rho(0)} \D\xi\, \phi_{0}(\xi) \\[.3em]
& \times \sum_{|j|>n} \int_{B_\rho(0)} V^{1/2}(\xi)\, (-\Delta+\kappa_0^2)^{-1}(\xi,\xi'+y_j-y_i)\, V^{1/2}(\xi') \phi_0(x')\,\D\xi'
 \end{align*}
For a straight chain we have $|y_j-y_i|=a|j-i|$, for a curved one the distance under the assumptions of Proposition~\ref{prop:bentline} also increases linearly as $|j-i|\to\infty$. Given the fact that the resolvent kernel is asymptotically exponentially decreasing, we see that the second sum converges for a fixed $Y$ and has a bound independent of $i$ which yields the sought result.
\end{proof}

In view of the lemma and relation \eqref{BStrial} one has therefore to check that
 $$ 
(h_n\phi_0, K_{V,Y}(-\kappa_0^2)h_n\phi_0) - (h_n\phi_0, K_{V,Y_0}(-\kappa_0^2)h_n\phi_0) > 0.
 $$ 
holds for all $n$ large enough, or equivalently, that
 $$ 
\lim_{n\to\infty} (h_n\phi_0^Y, K_{V,Y}(-\kappa_0^2)h_n\phi_0^Y) - (h_n\phi_0 K_{V,Y_0}(-\kappa_0^2)]h_n\phi_0) > 0.
 $$ 
In view of \eqref{BSmatrix}, in turn, this will be true if we prove that
 \begin{equation} \label{BStrial2}
(\phi_0, [K^{(i,j)}_{V,Y}(-\kappa^2)- K^{(i,j)}_{V,Y_0}(-\kappa^2)]\phi_0) \ge 0
 \end{equation}
holds any $\kappa>0$ and all $i,j\in\Z$ being \emph{positive} for some of them. In the last relation we allow for an abuse of notation writing the first part as the matrix element between the functions $\phi_0$ keeping in mind, of course, that the axes of its components in $B_\rho(y_i)$ and $B_\rho(y_j)$ are in general not parallel. Naturally, the left-hand side of \eqref{BStrial2} is zero for $i=j$ or for $i,j$ such that the segment of $\Gamma$ between $y_i$ and $y_j$ is straight. If $Y\ne Y_0$, however, there is a pair of indices for which this is not the case, $|y_i-y_j|<|i-j|a$, in fact, infinitely many such pairs. Was the potential a point interaction as in \cite{Ex01}, the result would follow immediately from the monotonicity of the resolvent kernel, but the problem is more subtle here because bending of the chain, even a weak one, may cause some distances between points of potential supports outside the ball centers to \emph{increase}.

Denoting the resolvent kernel by $G_{i\kappa}$ for the sake of brevity, we can write the left-hand side of \eqref{BStrial2} explicitly as
 \begin{align*}
& \int_{B_\rho(0)} \int_{B_\rho(0)} \phi_0(\xi) V^{1/2}(\xi) \big[ G_{i\kappa}(y_i-y_j +\xi-\xi') - G_{i\kappa}(y_i^{(0)}\!-y_j^{(0)}\! +\xi-\xi') \big] \\[.3em] & \qquad \times V^{1/2}(\xi') \phi_0(\xi') \D\xi\,\D\xi' \\[.3em]
& = \frac12 \int_{B_\rho(0)} \int_{B_\rho(0)} \phi_0(\xi) V^{1/2}(\xi) \big[ G_{i\kappa}(y_i-y_j +\xi-\xi') - G_{i\kappa}(y_i^{(0)}\!-y_j^{(0)}\! +\xi-\xi') \\[.3em] & \qquad + G_{i\kappa}(y_i-y_j -\xi+\xi') - G_{i\kappa}(y_i^{(0)}\! -y_j^{(0)}\! -\xi+\xi')\big] V^{1/2}(\xi') \phi_0(\xi') \D\xi\,\D\xi,
 \end{align*}
where we have used the fact that $\phi_0(\xi) V^{1/2}(\xi) = \phi_0(-\xi) V^{1/2}(-\xi)$. The integration over $\xi$ can be split into the transversal and longitudinal part with respect to the vector $y_i-y_j$, namely $\int_{B_\rho(0)} \D\xi = \int_{-\rho}^\rho \D\xi_\perp \int_{\scriptsize -\sqrt{\rho^2-s_\perp^2}}^{\scriptsize \sqrt{\rho^2-s_\perp^2}} \D\xi_{||}$, and similarly for $\xi'$. What is important is the behavior of the square bracket at the longitudinal integration. We observe that not only the function $G_{i\kappa}(\cdot)$ is convex, but the same is true for $G_{i\kappa}(|y_i-y_j|+\cdot) - G_{i\kappa}(|y_i^{(0)}\!-y_j^{(0)}|+\cdot)$ as long as $|y_i-y_j| < |y_i^{(0)}\!-y_j^{(0)}|$, and in that case the square bracket can be estimated by virtue of Jensen's inequality from below by
 $$ 
G_{i\kappa}(|y_i-y_j|) - G_{i\kappa}(|y_i^{(0)}\!-y_j^{(0)}|) > 0.
 $$ 
In combination with the positivity of $\phi_0V^{1/2}$ this proves that the right-hand side of \eqref{BStrial2} is positive whenever $|y_i-y_j|<|i-j|a$; this concludes the proof of the theorem.
\end{proof}
 \begin{remark} \label{rem:asymmetry}
The symmetry of the potential $V$ played an important role in the proof. In its absence the above argument would work only if the deformation of $\Gamma$ is strong enough to diminish \emph{all} the distances between the points of the considered pairs of balls, for instance, if $|y_i-y_{i+1}| <a-2\rho$ holds for neighboring balls. Such a condition is clearly not optimal and the harder to fulfill the larger the ratio $\frac{\rho}{a}$ is; we postpone the discussion of this question to a later publication.
 \end{remark}

\section{Shrinking the potential}
\label{s:shrinking}

If the potential $V$ is strongly localized one may think about replacing $H_{V,Y}$ by a singular Schr\"odinger operator. Properties of point-interaction Hamiltonians are well known and nicely summarized in the classical monograph \cite{AGHH}. These operators can be introduced by several equivalent ways; one of them is based on self-adjoint extensions, starting from restriction of the Laplacian to $C_0^\infty(\R^\nu\setminus Y)$. In dimensions $\nu=2,3$ the resulting operator has deficiency indices $(\#Y,\#Y)$ \cite{BG85}; among its numerous self-adjoint extensions one focuses on the \emph{local} ones characterized -- in the present situation when all the interactions are the same -- by the boundary conditions
\begin{equation} \label{pi-bc}
L_1(\psi,y_j) -\alpha L_0(\psi,y_j)=0, \quad \alpha\in\R,
\end{equation}
coupling the generalized boundary values
\begin{align*}
L_0(\psi,y):=& \lim_{|x-y|\to 0}\, {\psi(x)\over
\phi_\nu(x\!-\!y)}, \\ L_1(\psi,y):=& \lim_{|x-y|\to 0}
\bigl\lbrack \psi(x)- L_0(\psi,y)\, \phi_\nu(x-y) \bigr\rbrack,
\end{align*}
where $\phi_\nu$ are the appropriate fundamental solutions,
$$
\phi_2(x)= -{1\over2\pi}\, \ln|x|, \quad \phi_3(x) = {1\over 4\pi|x|},
$$
for $\nu=2,3$, respectively. These point interactions are non-additive perturbations of the free Hamiltonian; the latter obviously corresponds to $\alpha=\infty$. Following \cite{AGHH} we employ the symbol $-\Delta_{\alpha,Y}$ for the singular operators defined by boundary conditions \eqref{pi-bc}.

Approximation of point interactions in dimensions $\nu=2,3$ is not an easy matter; it is well known that such a limit is generically trivial. There are nevertheless situations when one can make sense of such a limit:
\begin{proposition} \label{thm:piapprox}
Let $\nu=3$ and assume that $H_V$ has a zero-energy resonance with which one can associate a solution $f\in L^2_\mathrm{loc}(\R^\nu)$ of the equation $V^{1/2}(-\Delta)^{-1}V^{1/2}f=f$, then the family of operators
$$ 
H_{V_\varepsilon,Y} := -\Delta + \frac{\mu(\varepsilon)}{\varepsilon^2}\, \sum_i V\big(\textstyle{\frac{\cdot\,-\,y_i}{\varepsilon}}\big),
$$ 
where $\mu$ is real analytic in the vicinity of zero and such that $\mu(0)=1$, converges in the norm resolvent sense to $-\Delta_{\alpha,Y}$ with $\alpha:= -\mu'(0)|(V_Y^{1/2},f)|^{-2}$.
\end{proposition}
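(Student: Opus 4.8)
The plan is to run the classical Birman--Schwinger (Konno--Kuroda) resolvent scheme for the norm-resolvent approximation of point interactions, adapted to several centres. Writing $w:=V^{1/2}$ and, for each centre, $w_{\varepsilon,i}(x):=V^{1/2}\big(\tfrac{x-y_i}{\varepsilon}\big)$, the symmetrised (Konno--Kuroda) resolvent formula gives, for $z$ with $\mathrm{Im}\sqrt z>0$ and $-\mathrm{Re}\,z$ large enough,
$$(H_{V_\varepsilon,Y}-z)^{-1}=G_z+\tfrac{\mu(\varepsilon)}{\varepsilon^2}\sum_{i,j}G_z\,w_{\varepsilon,i}\,[M_\varepsilon(z)^{-1}]_{ij}\,w_{\varepsilon,j}\,G_z,$$
where $G_z=(-\Delta-z)^{-1}$ and $M_\varepsilon(z)$ is the operator matrix with blocks $\delta_{ij}-\tfrac{\mu(\varepsilon)}{\varepsilon^2}w_{\varepsilon,i}G_z w_{\varepsilon,j}$, the latter being the natural several-centre version of the Birman--Schwinger operator \eqref{BSop}. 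Everything reduces to the behaviour of $M_\varepsilon(z)^{-1}$ and of the boundary factors $G_z w_{\varepsilon,i}$ as $\varepsilon\to0$, and the resonance hypothesis enters as the statement that the relevant limiting block is non-invertible.

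First I would undo the scaling blockwise. Conjugating the $(i,j)$ block by the unitaries $(\tilde U^{(i)}_\varepsilon\psi)(\xi)=\varepsilon^{3/2}\psi(y_i+\varepsilon\xi)$ and using the three-dimensional dilation identity $G_z(\varepsilon\,\cdot)=\varepsilon^{-1}G_{\varepsilon^2 z}(\cdot)$, the diagonal blocks turn into $1-\mu(\varepsilon)\,w\,G_{\varepsilon^2 z}\,w$ and converge in Hilbert--Schmidt norm to $1-V^{1/2}G_0 V^{1/2}$ with $G_0=(-\Delta)^{-1}$, whereas the off-diagonal blocks become $-\varepsilon\mu(\varepsilon)\,w(\xi)\,G_z\big(y_i-y_j+\varepsilon(\xi-\xi')\big)w(\xi')$ and hence vanish as $\OO(\varepsilon)$, with leading rank-one term $-\varepsilon\,G_z(y_i-y_j)\,w\otimes w$. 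By hypothesis $1-V^{1/2}G_0 V^{1/2}$ has the one-dimensional kernel spanned by the resonance function $f$, so in the limit every diagonal block is singular along the common direction $f$, while its regular part stays boundedly invertible. For $\nu=3$ the kernel of $G_{\varepsilon^2 z}$ moreover admits the expansion $G_{\varepsilon^2 z}=G_0+\tfrac{i\sqrt z}{4\pi}\,\varepsilon\,\mathbf 1\otimes\mathbf 1+\OO(\varepsilon^2)$, the constant-kernel correction being exactly what produces the $-\tfrac{i\sqrt z}{4\pi}$ term of the limiting point interaction; this expansion is the reason the statement is restricted to three dimensions.

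Next I would reduce the inversion by a Feshbach--Schur (Grushin) argument to the $\#Y$-dimensional space spanned by the directions $f_i$, one per centre: on its orthogonal complement $M_\varepsilon(z)$ is uniformly invertible with a limit, so the whole singular part is carried by a finite $\#Y\times\#Y$ matrix $\Theta_\varepsilon(z)$. Inserting $\mu(\varepsilon)=1+\mu'(0)\varepsilon+\OO(\varepsilon^2)$ and the above expansion of $G_{\varepsilon^2 z}$, and using $V^{1/2}G_0 V^{1/2}f=f$, the entries of $\Theta_\varepsilon(z)$ are of order $\varepsilon$: the diagonal ones equal $-\varepsilon\big[\mu'(0)+\tfrac{i\sqrt z}{4\pi}|(V^{1/2},f)|^2\big]+\OO(\varepsilon^2)$ and the off-diagonal ones $-\varepsilon\,G_z(y_i-y_j)\,|(V^{1/2},f)|^2+\OO(\varepsilon^2)$. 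This factor $\varepsilon$ is precisely cancelled by the coupling $\varepsilon^{-2}$ together with the $\varepsilon^{+1}$ produced by the two boundary factors, which concentrate as $\varepsilon^{-1}G_z w_{\varepsilon,i}\to G_z(\cdot-y_i)\,(V^{1/2},\,\cdot\,)$ and contribute one resonance overlap $(V^{1/2},f)=(V_Y^{1/2},f)$ on each side. Dividing out the common $|(V_Y^{1/2},f)|^2$, the reduced matrix converges to the Krein matrix of $-\Delta_{\alpha,Y}$, with diagonal $\alpha-\tfrac{i\sqrt z}{4\pi}$ and off-diagonal $-G_z(y_i-y_j)$; comparing the two resolvent expressions term by term yields norm-resolvent convergence and reads off $\alpha=-\mu'(0)\,|(V_Y^{1/2},f)|^{-2}$.

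The hard part will be the controlled inversion of $M_\varepsilon(z)$. One has to show that the resonance eigenvalue of the diagonal blocks is simple and stays isolated uniformly for small $\varepsilon$, so that the Grushin splitting into a rank-$\#Y$ singular part and a uniformly bounded, invertible regular part is legitimate; and one has to make the two competing limits --- the $\varepsilon^{-1}$ blow-up of the projected inverse against the $\OO(\varepsilon)$ decay of the boundary factors $G_z w_{\varepsilon,i}$ --- combine into a finite, norm-convergent product. Establishing the rank-one concentration $\varepsilon^{-1}G_z w_{\varepsilon,i}\to G_z(\cdot-y_i)\,(V^{1/2},\,\cdot\,)$ in the operator norm, uniformly on the pertinent subspaces, is the technical core; once it is in place, the remaining identifications are bookkeeping with the normalisation of the resonance function $f$, and the off-centre blocks, being $\OO(\varepsilon)$ with the benign kernel $G_z(y_i-y_j)$, cause no difficulty.
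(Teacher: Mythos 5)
Your argument is correct in outline, but it takes a genuinely different route from the paper in the sense that the paper does not carry out this analysis at all: its proof is a two-line reduction to the known results of Albeverio--Gesztesy--H{\o}egh-Krohn--Holden (Theorems II.1.2.1 and III.1.2.1 of \cite{AGHH}), the only substantive observation being that the uniform separation $\inf_{i\ne j}|y_i-y_j|\ge 2a>0$ lets the finite- and one-centre analysis be applied to the whole array. What you have written is essentially a reconstruction of the proof contained in that reference --- the Konno--Kuroda formula, the blockwise rescaling $G_z(\varepsilon\,\cdot)=\varepsilon^{-1}G_{\varepsilon^2z}(\cdot)$, the resonance making the diagonal Birman--Schwinger block singular, the expansion $G_{\varepsilon^2z}=G_0+\tfrac{i\sqrt z}{4\pi}\varepsilon\,\mathbf 1\otimes\mathbf 1+\OO(\varepsilon^2)$ feeding the $-\tfrac{i\sqrt z}{4\pi}$ term, and the Feshbach reduction to the Krein matrix of $-\Delta_{\alpha,Y}$ --- so your power counting and the identification $\alpha=-\mu'(0)|(V_Y^{1/2},f)|^{-2}$ are the standard ones and come out right. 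Two points deserve explicit attention if you want this to stand on its own. First, you tacitly assume the zero-energy resonance is simple ($\ker(1-V^{1/2}G_0V^{1/2})=\mathrm{span}\{f\}$); the stated formula for $\alpha$ presupposes this, but it should be said, since a degenerate resonance produces a different (non-local) limit. Second, since $Y$ may be infinite, the reduced object $\Theta_\varepsilon(z)$ is an operator on $\ell^2(Y)$ rather than a finite matrix, and its uniform invertibility (and the summability of the off-diagonal entries $G_z(y_i-y_j)$) rests precisely on the separation bound $\inf_{i\ne j}|y_i-y_j|\ge 2a$ --- this is the one ingredient the paper's proof singles out and the one place where your ``bookkeeping'' needs a uniform, not merely entrywise, estimate.
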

\begin{proof}
Since the points of $Y$ do not accumulate, $\inf_{i\ne j} |y_i-y_j|\ge 2a>0$, the claim follows from the analysis presented in \cite{AGHH}, in particular, from Theorems II.1.2.1 and III.1.2.1 there.
\end{proof}

In the two-dimensional case zero-energy resonances of $H_V$ are again crucial. The scaled-potential approximation is worked out in \cite{AGHH} for single point interaction but it can be extended to more complicated sets $Y$ similarly as for $\nu=3$; the resulting parameter $\alpha$ again depends on how exactly the coupling constant is scaled in the vicinity of the resonance.

For point-interaction Hamiltonians $-\Delta_{\alpha,Y}$ one can also ask about the implications that a nontrivial geometry would have for the spectrum. What one finds in this case is consistent with the results of the previous section: a bend or a local deformation of a straight periodic array, which shortens the Euclidean distances, lowers the spectral threshold, and if $Y$ is asymptotically straight in a suitable sense so that the essential spectrum is preserved, isolated eigenvalues emerge again \cite{Ex01}. At the same time, the approximation of $-\Delta_{\alpha,Y}$ by Schr\"odinger operators with scaled potential does not require spherical symmetry of the potential $V$ which, similarly as Remark~\ref{rem:asymmetry}, gives a hint that assumptions of Theorem~\ref{thm:bent_bs} might be weakened.

\section{Ground state optimization}
\label{s:opt}

Let us return to arrays of regular potentials, this time finite ones, and change slightly the setting. We consider the two-dimensional situation and fix the curve $\Gamma$ which will be now a \emph{circle} of radius $R$ on which we place the centers of the disks $B_\rho(y_i)$; without loss of generality we may identify the circle center with the origin of the coordinates. The only restriction imposed is that they must not overlap, that is, $\rho\le R\sin\frac{\pi}{N}$, where $N:=\#Y$.

We are interested in the configuration which makes the principal eigenvalue of $H_{V,Y}$ maximal. It appears that this happens if $Y$ has the full symmetry with respect to the discrete rotations:
\begin{theorem} \label{thm:circleopt}
Up to rotations, $\epsilon_1(H_{V,Y})=\inf\sigma(H_{V,Y})$ is uniquely maximized by the configurations in which all the neighboring points of $Y$ have the same angular distance $\frac{2\pi}{N}$.
\end{theorem}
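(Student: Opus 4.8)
The plan is to use the Birman--Schwinger principle in the same spirit as Theorem~\ref{thm:bent_bs}, but now comparing two finite arrays on the \emph{same} circle $\Gamma$ rather than a curved array against a straight one. By Proposition~\ref{prop:BS}, the principal eigenvalue $\epsilon_1(H_{V,Y})$ is the unique $-\kappa^2$ for which $\sup\sigma(K_{V,Y}(-\kappa^2))=1$, and since $\kappa\mapsto\sup\sigma(K_{V,Y}(-\kappa^2))$ is decreasing, maximizing $\epsilon_1$ over configurations $Y$ is equivalent to maximizing the top eigenvalue of the Birman--Schwinger operator $K_{V,Y}(-\kappa^2)$ at a \emph{fixed} value of $\kappa$. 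So I would fix $\kappa>0$ and study how the largest eigenvalue of the $N\times N$ block operator with entries $K_{V,Y}^{(i,j)}(-\kappa^2)=V_i^{1/2}(-\Delta+\kappa^2)^{-1}V_j^{1/2}$ from \eqref{BSmatrix} depends on the angular positions of the $N$ disk centers.

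The key observation is that because $V$ is radial and all disks lie on a circle of fixed radius $R$, each off-diagonal block $K_{V,Y}^{(i,j)}$ depends on the pair $(i,j)$ only through the Euclidean chord length $d_{ij}=|y_i-y_j|=2R\sin\frac{|\varphi_i-\varphi_j|}{2}$, where $\varphi_i$ are the angular coordinates; the diagonal blocks are all equal. The resolvent kernel $G_{i\kappa}$ is a positive, decreasing, convex function of distance, so the operator norm (or, more precisely, the positivity-preserving structure) of each block is controlled monotonically by $d_{ij}$. First I would set up the variational characterization of the top eigenvalue, $\sup\sigma(K_{V,Y}(-\kappa^2))=\sup_{\|\Phi\|=1}(\Phi,K_{V,Y}(-\kappa^2)\Phi)$, with $\Phi=(\phi_1,\dots,\phi_N)$. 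Since $K_{V,Y}(-\kappa^2)$ is positivity improving (its kernel is strictly positive), the maximizing $\Phi$ can be taken componentwise nonnegative; this is the analogue of the positive generalized eigenfunction $\phi_0$ used in Theorem~\ref{thm:bent_bs}. The problem then reduces to maximizing a sum $\sum_{i,j}\langle\phi_i, G_{i\kappa}(d_{ij}+\cdot)\phi_j\rangle$ over both the test functions and the chord lengths $d_{ij}$.

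The heart of the argument is to show that this functional is maximized, among all admissible angular configurations, precisely by the equidistant one $\varphi_{i+1}-\varphi_i=\frac{2\pi}{N}$. The natural route is a rearrangement/convexity argument on the circle: the chords $\{d_{ij}\}$ are determined by the gaps $\{\varphi_{i+1}-\varphi_i\}$ summing to $2\pi$, and because $G_{i\kappa}$ is decreasing and convex while the chord length $2R\sin\frac{\theta}{2}$ is concave in the angular gap $\theta$ on $[0,\pi]$, a symmetrization of the angular gaps toward equality simultaneously shortens the relevant chords and hence increases each matrix element. I would make this precise by a symmetric-decreasing rearrangement or a two-gap transplantation step: replace an unequal adjacent pair of gaps by their average, show via Jensen's inequality (exactly as in the convexity estimate at the end of the proof of Theorem~\ref{thm:bent_bs}) that no matrix element decreases and at least one strictly increases, and iterate. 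Strict positivity of $\phi_0 V^{1/2}$ together with the positivity-improving property gives the \emph{uniqueness} up to rotation.

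The main obstacle I anticipate is twofold. First, unlike the open chain of Theorem~\ref{thm:bent_bs}, on the circle one cannot shorten all chords at once: equalizing one pair of gaps shortens some chords while potentially lengthening others, so a naive ``all distances decrease'' argument fails, exactly the subtlety flagged in Remark~\ref{rem:asymmetry}. The correct statement is that equalization improves the \emph{total} quadratic form, and proving this requires handling the full interaction matrix rather than individual pairs --- most cleanly through a majorization statement for the multiset of chord lengths, showing that the equidistant configuration majorizes every other one, combined with Schur-convexity of the relevant spectral functional. Second, one must be careful that the optimizer $\Phi$ is allowed to vary with the configuration; the rigorous version compares $\sup\sigma(K_{V,Y})$ for a generic $Y$ with $\sup\sigma(K_{V,Y_\mathrm{sym}})$ by inserting the symmetric configuration's optimizer as a trial state, so that only the monotonicity of the kernel in the chord lengths, and not the eigenfunctions, needs to be tracked. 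Establishing the strict majorization and promoting it to a \emph{sharp} (uniqueness up to rotation) result is where the real work lies.
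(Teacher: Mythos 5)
Your overall framework coincides with the paper's: the Birman--Schwinger principle, a trial function obtained by transplanting the symmetric configuration's optimizer into the balls of a general configuration, and convexity of the resolvent kernel combined with concavity of the chord length in the angular variable. Two of your directional claims are inverted, however. Since $\kappa\mapsto\sup\sigma(K_{V,Y}(-\kappa^2))$ is decreasing and crosses the value one at $\kappa=\sqrt{-\epsilon_1}$, a \emph{larger} top Birman--Schwinger eigenvalue at fixed $\kappa$ corresponds to a \emph{lower} ground state energy; maximizing $\epsilon_1$ therefore means \emph{minimizing} $\sup\sigma(K_{V,Y}(-\kappa^2))$, not maximizing it. Likewise, by concavity of $\theta\mapsto 2R\sin\frac{\theta}{2}$, equalizing the angular gaps \emph{lengthens} the relevant chords on average -- this is exactly \eqref{BSposit} read from right to left -- and hence \emph{decreases} the matrix elements built from the decreasing kernel; the symmetric configuration is the one with the smallest quadratic form of $K$. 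Your final conclusion survives only because these two inversions cancel each other.

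The more serious problem is that the decisive step is missing. You rightly note that a local move equalizing two adjacent gaps shortens some chords and lengthens others, so no monotone argument chord-by-chord is available; but the two-gap transplantation you propose to iterate does not improve the functional monotonically either. For $N=4$, equalizing $\theta_1$ and $\theta_2$ leaves the diagonal spanning $\theta_1+\theta_2$ unchanged while shifting the span of the other diagonal, $\theta_2+\theta_3$, by $(\theta_1-\theta_2)/2$ -- an unpaired change whose sign depends on which gap was larger, so Jensen's inequality cannot be applied to it and the step-two contribution may move the wrong way. No majorization or Schur-convexity is needed to repair this. The paper's resolution is purely combinatorial: group the chords by their step $m=|i-j|$ and use the conservation law $\sum_{|i-j|=m}\beta_{ij}=2\pi m$, valid for \emph{every} configuration, where $\beta_{ij}$ denotes the angular span of the chord. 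One application of Jensen's inequality for the concave sine shows that the average step-$m$ chord of a general configuration is strictly shorter than the symmetric step-$m$ chord unless all step-$m$ spans coincide; a second application, for the convex decreasing $\tilde{G}_{i\kappa}$, converts this into $\sum_{|i-j|=m}\tilde{G}_{i\kappa}(d_{ij})\ge\nu_m\,\tilde{G}_{i\kappa}\big(\frac{1}{\nu_m}\sum_{|i-j|=m}d_{ij}\big)>\nu_m\,\tilde{G}_{i\kappa}\big(d^{(0)}_{i,i+m}\big)$, cf.~\eqref{BSchords}. Summing over $m$ gives the required strict inequality for the quadratic form, and strictness for at least one $m$ (in particular $m=1$) whenever $Y$ is not a rotation of $Y_\mathrm{sym}$ yields the uniqueness.
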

\begin{proof}
The potential is compactly supported, so the negative spectrum of $H_{V,Y}$ is now discrete and finite, and the ground state $\epsilon_1(H_{V,Y})$ is a simple eigenvalue. We denote by $Y_\mathrm{sym}$ the symmetric array in which all the neighboring points have the same angular distances. The real-valued eigenfunction $\psi_\mathrm{sym}$ associated with $\epsilon_1(H_{V,Y_\mathrm{sym}})$ has the appropriate symmetry: in polar coordinates we have $\psi_\mathrm{sym}(r,\varphi) = \psi_\mathrm{sym}(r,\varphi + \frac{2\pi n}{N})$ for any $n\in\Z$.

We use the Birman-Schwinger principle again and denote by $\phi_\mathrm{sym}$ the eigenfunction corresponding to the largest eigenvalue of $K_{V,Y_\mathrm{sym}}(\epsilon_\mathrm{sym})$, where $\epsilon_\mathrm{sym}= \inf\sigma(H_{V,Y_\mathrm{sym}})$. It also has the symmetry with respect to rotations on multiples of the angle $\frac{2\pi}{N}$, and as in the proof of Theorem~\ref{thm:bent_bs} we may suppose that it is real-valued and positive. Referring to the monotonicity of $K_{V,Y}(\cdot)$ stated in Proposition~\ref{prop:BS}, in order to show that $\epsilon_1(H_{V,Y})<\epsilon_1(H_{V,Y_\mathrm{sym}})$ holds whenever $Y\ne Y_\mathrm{sym}$, modulo discrete rotations, one has to check the inequality $\max\sigma(K_{V,Y}(\epsilon_\mathrm{sym})) > \max\sigma(K_{V,Y_\mathrm{sym}}(\epsilon_\mathrm{sym}))$, and to this goal it is sufficient to find a trial function $\phi$ such that
 \begin{equation} \label{BSring}
(\phi, K_{V,Y}(-\kappa_\mathrm{sym}^2)\phi) - \|\phi\|^2 >0, \quad \kappa_\mathrm{sym}=\sqrt{-\epsilon_\mathrm{sym}}.
 \end{equation}

A general configuration $Y$ of point on the circle is characterized by the family of angles $\theta_i,\, i=1,\dots,N$ satisfying $\sum_{i=1}^N \theta_j=2\pi$ as sketched in Fig.~\ref{fig:sectors} for $N=5$.
\begin{figure}[h!]
\centering
    \includegraphics[clip, trim=5cm 15cm 5cm 4cm, angle=0, width=0.67\textwidth]{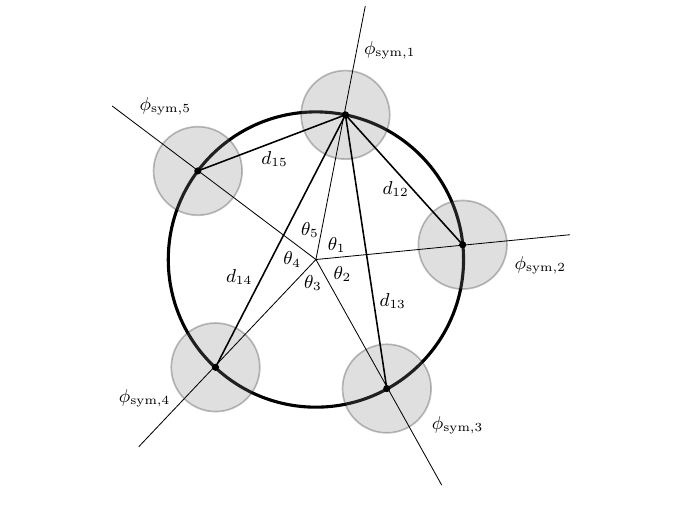}
\caption{To the proof of Theorem~\ref{thm:circleopt}}
\label{fig:sectors}
\end{figure}
As before we construct the trial function $\phi_Y$ as a `array of beads'; we start from the restriction of $\phi_\mathrm{sym}$ to the ball $B_\rho(y_1)$ calling it $\phi_{\mathrm{sym},1}$ and use it to create $\phi_{\mathrm{sym},j},\, j=2,\dots,N$, by rotating this function on the angle $\sum_{i=1}^{j-1} \theta_i$ around the origin. For $Y=Y_\mathrm{sym}$ the left-hand side of \eqref{BSring} vanishes by construction, hence it is sufficient to prove that
 $$ 
(\phi_Y, K_{V,Y}(-\kappa^2)\phi_Y) - (\phi_\mathrm{sym}, K_{V,Y_\mathrm{sym}}(-\kappa^2)\phi_\mathrm{sym}) >0
 $$ 
holds for any $\kappa>0$, in particular, for $\kappa=\kappa_\mathrm{sym}$, or explicitly
 \begin{align*}
& \frac{1}{2\pi} \sum_{i,j=1}^N \Big\{ \int_{B_\rho(0)} \int_{B_\rho(0)} \phi_\mathrm{sym}(\xi) V^{1/2}(\xi)\, K_0(\kappa|y_i+\xi-y_j-\xi'|)\, \\[.15em] & \qquad\qquad \times V^{1/2}(\xi') \phi_\mathrm{sym}(\xi')\,\D\xi\,\D\xi' \\[.3em] & -  \int_{B_\rho(0)} \int_{B_\rho(0)} \phi_\mathrm{sym}(\xi) V^{1/2}(\xi)\, K_0(\kappa|y_i^{(0)}\!+\xi-y_j^{(0)}\!-\xi'|) \\[.15em] & \qquad\qquad \times  V^{1/2}(\xi') \phi_\mathrm{sym}(\xi')\,\D\xi\,\D\xi' \Big\} > 0
 \end{align*}
We denote $d_{ij} := |y_i-y_j|$ and $d_{ij}^{(0)} := |y_i^{(0)}-y_j^{(0)}|$ and write the first part of the above expression as $\sum_{i,j=1}^N\tilde{G}_{i\kappa}(d_{ij})$, in the second one $d_{ij}$ is replaced by $d_{ij}^{(0)}$; we can do that because the expressions obtained by the integration over the balls depend only on the distances between the their centers. The sought inequality then takes the form
$$
\sum_{i,j=1}^N \tilde{G}_{i\kappa}(d_{ij}) > \sum_{i,j=1}^N \tilde{G}_{i\kappa}(d_{ij}^{(0)}),
$$
and rearranging the summation order, we have to check that
$$
F(d_{ij}) := \sum_{m=1}^{[N/2]} \sum_{|i-j|=m} \big[ \tilde{G}_{i\kappa}(d_{ij})- \tilde{G}_{i\kappa}\big(d_{ij}^{(0)}\big)\big] > 0
$$
holds for every family $\{d_{ij}\}$ which is not congruent with $\{d_{ij}^{(0)}\}$.

The resolvent kernel contained in the expression is a convex function of its argument, and since $|\xi-\xi'|<2\rho<d_{ij}$, the function $d_{ij}\mapsto|y_i+\xi-y_j-\xi'|$ is increasing and concave. Consequently, $d_{ij}\mapsto K_0(\kappa|y_i+\xi-y_j-\xi'|)$ is convex again for any $\xi,\xi'\in B_\rho(0)$, and being integrated with the positive weight the result will be again convex. This makes it possible to apply Jensen's inequality which yields
 \begin{equation} \label{BSchords}
F(d_{ij}) \ge \sum_{m=1}^{[N/2]} \nu_n \Big[ \tilde{G}_{i\kappa}\Big( \frac{1}{\nu_n} \sum_{|i-j|=m} d_{ij} \Big) - \tilde{G}_{i\kappa}\big(d_{i,i+m}^{(0)}\big) \Big],
 \end{equation}
where $\nu_n$ is the number of distinct line segments connecting the points $y_i$ and $y_{i+m}$ for $m=1,\dots,N$, that is, $\nu_n=N$ except the case when $N$ is even and $m=\frac12 N$ where $\nu_n= \frac12 N$.

To prove that the right-hand side of \eqref{BSchords} is positive we use the fact the convexity is not the only property which $\tilde{G}_{i\kappa}(\cdot)$ inherited from the resolvent kernel; since $d_{ij}\mapsto|y_i+\xi-y_j-\xi'|$ is increasing, the integrated function is (strictly) decreasing which means that it is only necessary to check that
 \begin{equation} \label{BSposit}
\frac{1}{\nu_n} \sum_{|i-j|=m} d_{ij} < d_{i,i+m}^{(0)}
 \end{equation}
for any fixed $i$. Denoting $\beta_{ij}=\sum_{k=i}^{j-1} \theta_k$, we have $d_{ij}= 2\sin\frac12\beta_{ij}$ and $d_{i,i+m}^{(0)} = 2\sin\frac{\pi m}{N}$, and since the sine is concave in $(0,\pi)$, we can use Jensen's inequality for concave function which gives
$$
\frac{1}{\nu_n} \sum_{|i-j|=m} 2\sin\frac12\beta_{ij} < 2\sin\Big( \frac{1}{\nu_n} \sum_{|i-j|=m} \frac12\beta_{ij} \Big) = 2\sin\frac{\pi m}{N} = d_{i,i+m}^{(0)}
$$
for those families $\{d_{ij}\}$ of circle chords which are not congruent with $\{d_{ij}^{(0)}\}$; this concludes the proof.
\end{proof}

 \begin{remark} \label{rem:3D circle}
For simplicity, we have formulated the claim and its proof in the two-dimensional setting but the argument extends easily to arrays $Y\subset\R^3$ situated on a planar circle. Note also that the symmetry of the potential $V$ can abandoned as long as all the potential wells involved can be obtained one from another by rotations.
 \end{remark}

Beyond this simple extension there are more complicated questions. To begin with, the maximizing configuration in Theorem~\ref{thm:circleopt} places the disk centers at vertices of regular polygon of the perimeter $2NR\sin\frac{\pi}{N}$. It is then natural to ask about the maximization within a wider class of sets $Y$ in the setting analogous to that used in Sec.~\ref{s:bentchain}.
\begin{conjecture} \label{conj:loopopt}
Suppose that the points of $Y$ are on a loop $\Gamma$ of a fixed length in $\R^\nu,\, \nu=2,3$, equidistantly in the arc length variable, and the balls $B_\rho(y_i)$ do not overlap. Then $\epsilon_1(H_{V,Y})=\inf\sigma(H_{V,Y})$ is maximized, uniquely up to Euclidean transformations, by a planar regular polygon of $\#Y$ vertices.
\end{conjecture}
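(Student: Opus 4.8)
The plan is to run the Birman--Schwinger strategy of Theorem~\ref{thm:circleopt}, exploiting that $H_{V,Y}$ depends on the loop only through the point set $Y=\{y_i\}$, while the loop enters solely as the constraint that consecutive points sit at the common arc-length spacing $a$, hence $|y_{i+1}-y_i|\le a$ for all $i$ (cyclically), since a straight-line distance never exceeds the length of the connecting arc. Conversely, any configuration obeying this chain constraint is realizable on a loop of the prescribed length by joining consecutive points with arcs of length exactly $a$. Thus the optimization is equivalent to maximizing $\epsilon_1(H_{V,Y})$ over all closed configurations with $|y_{i+1}-y_i|\le a$ and non-overlapping balls. Writing $z=-\kappa^2$ and invoking Proposition~\ref{prop:BS}, it then suffices to show that the regular planar $N$-gon $Y_{\mathrm{reg}}$ minimizes $\max\sigma(K_{V,Y}(-\kappa^2))$ for every $\kappa>0$; equivalently, that any admissible $Y\ne Y_{\mathrm{reg}}$ admits a trial function $\phi_Y$ with $(\phi_Y,K_{V,Y}(-\kappa_0^2)\phi_Y)>\|\phi_Y\|^2$, where $\kappa_0=\sqrt{-\inf\sigma(H_{V,Y_{\mathrm{reg}}})}$.

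First I would take the positive top eigenfunction $\phi_{\mathrm{reg}}$ of $K_{V,Y_{\mathrm{reg}}}(-\kappa_0^2)$ and build $\phi_Y$ as an array of beads obtained by transporting the single-ball profile $\phi_{\mathrm{reg},1}$ to each $y_i$. Expanding the form through the entries \eqref{BSmatrix} and using $V\ge0$ together with the positivity of $\phi_{\mathrm{reg}}V^{1/2}$, the claim reduces, exactly as in \eqref{BSchords}--\eqref{BSposit} via convexity of the resolvent kernel and Jensen's inequality, to the purely geometric statement that for each chord order $m=1,\dots,[N/2]$,
\begin{equation*}
\frac{1}{\nu_m}\sum_{|i-j|=m}|y_i-y_j|\;\le\;a\,\frac{\sin(\pi m/N)}{\sin(\pi/N)},
\end{equation*}
the right-hand side being the $m$-th chord of the regular $N$-gon with edge $a$, with equality for every $m$ forcing $Y=Y_{\mathrm{reg}}$ up to a Euclidean motion. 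In words: the regular polygon simultaneously maximizes every averaged chord.

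The hard part is this inequality, which is genuinely harder than on the circle. In Theorem~\ref{thm:circleopt} all competitors shared one circle, so the chords were $2R\sin\tfrac12\beta_{ij}$ and concavity of the sine settled the matter at once; here there is no ambient circle to linearize the distances. Writing the edge vectors $e_k=y_{k+1}-y_k$ with $|e_k|\le a$ and $\sum_k e_k=0$, one must bound $\sum_i|e_i+\dots+e_{i+m-1}|$, and the naive triangle bound $ma$ is far from the concave target; the closure relation must enter essentially. I would attempt a Fourier expansion of $(e_k)$ on $\Z_N$, aiming to show that spreading the spectral mass away from the single mode that realizes $Y_{\mathrm{reg}}$ strictly lowers the averaged chords. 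The autocorrelation identity $\sum_i|e_i+\dots+e_{i+m-1}|^2=\sum_{|d|<m}(m-|d|)A_d$ with $A_d=\sum_k e_k\cdot e_{k+d}$ controls the $L^2$ version, but Cauchy--Schwarz loses the sharp $L^1$ bound and its rigidity, so a genuinely $L^1$ argument or a symmetrization/rearrangement flow seems unavoidable; in three dimensions one must additionally show that any out-of-plane folding strictly shortens chords, forcing planarity of the optimizer.

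A second obstacle, absent on the circle, is that once the competitors leave a common circle the entry $K^{(i,j)}_{V,Y}$ no longer depends on $y_i,y_j$ through the scalar $|y_i-y_j|$ alone, since the relative orientation of the two beads is not fixed by that distance; the radial orientation of each bead, which made $\tilde G_{i\kappa}$ a function of $d_{ij}$ in Theorem~\ref{thm:circleopt}, has no analogue here. The cleanest remedy is to replace the beads by a common radially symmetric profile, legitimate because $V$ is radial (cf.~Remark~\ref{rem:3D circle}), so that $\tilde G_{i\kappa}$ again depends only on $d_{ij}$; the price is that the baseline $(\phi_{\mathrm{reg}},K_{V,Y_{\mathrm{reg}}}\phi_{\mathrm{reg}})$ need no longer equal $\|\phi_{\mathrm{reg}}\|^2$, so one must check separately that the loss incurred at $Y_{\mathrm{reg}}$ is dominated by the strict geometric gain at $Y\ne Y_{\mathrm{reg}}$. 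Reconciling these two points---the sharp averaged-chord inequality with its rigidity, and the orientation dependence without sacrificing strictness---is where I expect the real difficulty to lie, which is presumably why the statement is recorded here as a conjecture.
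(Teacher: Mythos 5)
The statement you are proving is recorded in the paper as a \emph{conjecture}: the author offers no proof, only the remark that the analogous result holds in the point-interaction limit \cite{Ex19}. So there is nothing to compare your argument against, and the relevant question is whether you have actually closed the gap the author left open. You have not, and to your credit you say so: your text is a plan that correctly sets up the Birman--Schwinger reduction (the equivalence of the loop constraint with the cyclic chain condition $|y_{i+1}-y_i|\le a$ is right, as is the target averaged-chord inequality with the regular $N$-gon chord $a\sin(\pi m/N)/\sin(\pi/N)$ on the right-hand side), and then names two obstructions without resolving either. One remark on the first obstruction: you are too pessimistic. The averaged-chord inequality you need is the known discrete mean-chord inequality of Exner--Harrell--Loss (Lett.\ Math.\ Phys.\ 75 (2006), 225--233), which is exactly the ingredient used in \cite{Ex19}; moreover the Fourier route you sketch and then abandon does work. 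The closure relation $\sum_k e_k=0$ kills the zero mode, the $L^2$ identity gives $\sum_i|y_{i+m}-y_i|^2\le Na^2\sin^2(\pi m/N)/\sin^2(\pi/N)$ because the ratio $\sin^2(m\pi\omega/N)/\sin^2(\pi\omega/N)$ is maximized at $\omega=\pm1$, and Cauchy--Schwarz then yields the $L^1$ bound \emph{with the same sharp constant}, since $\frac1N\sum d\le(\frac1N\sum d^2)^{1/2}$; rigidity also survives, as equality forces $|e_k|\equiv a$ and $\hat e$ supported on $\omega=\pm1$, i.e.\ a planar regular polygon. So the geometric inequality, including planarity in $\R^3$, is not where the conjecture is stuck.

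The genuine gap is your second obstruction, and your proposed remedy for it fails. Off a common circle the entries $K^{(i,j)}_{V,Y}$ depend on the relative orientation of the two bead profiles, not on $d_{ij}$ alone, so the reduction to a function $\tilde G_{i\kappa}(d_{ij})$ that made Theorem~\ref{thm:circleopt} work is unavailable. Replacing the beads by a radially symmetric profile restores the dependence on $d_{ij}$, but the restriction of the true threshold eigenfunction of $K_{V,Y_{\mathrm{reg}}}(-\kappa_0^2)$ to a single ball is in general \emph{not} radial, so the radialized trial function incurs a fixed, $Y$-independent deficit $(\tilde\phi,K_{V,Y_{\mathrm{reg}}}\tilde\phi)-\|\tilde\phi\|^2<0$ at the reference configuration; the geometric gain from $Y\ne Y_{\mathrm{reg}}$, on the other hand, tends to zero as $Y\to Y_{\mathrm{reg}}$, so the deficit cannot be ``dominated by the strict geometric gain'' uniformly, and the argument breaks down precisely for nearly regular configurations --- the regime where strict maximality must be established. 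Handling the orientation dependence of the beads (or proving a version of \eqref{BStrial2} that tolerates it) is the missing idea, and it is presumably why the statement remains a conjecture rather than a theorem.
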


The next question is much harder. We again fix the manifold on which points of $Y$ are allowed to be, but this time not as a curve, but as a sphere in $\R^3$, and ask about the configurations optimizing the ground state energy of $H_{V,Y}$. This problem has the flavor of the celebrated Thomson problem \cite{Th04, Twiki}, not fully solved after more than a century of efforts, except that we seek a maximizing, not minimizing configuration. What one can realistically hope for is the solution in particular cases of low $N=\#Y$:

\begin{conjecture} \label{conj:thomson}
Let the point of $Y$ be arranged on the a sphere in such a way that the balls $B_\rho(y_i)$ do not overlap. Then $\epsilon_1(H_{V,Y})=\inf\sigma(H_{V,Y})$ is maximized, uniquely up to Euclidean transformations, by the following five `equilateral' configurations:
\begin{itemize}
\setlength{\itemsep}{0pt}
\item three \emph{simplices}, with $N=2$ (a pair antipodal points), $N=3$ (equilateral triangle), and $N=4$ (tetrahedron),
\item \emph{octahedron} with $N=6$,
\item \emph{icosahedron} with $N=12$.
\end{itemize}
\end{conjecture}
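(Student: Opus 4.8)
The plan is to run the same Birman--Schwinger strategy as in the proof of Theorem~\ref{thm:circleopt}, now with the sphere playing the role of the circle, and to reduce the optimisation to a known energy-minimisation problem for point configurations on $S^2$. Writing $Y_\mathrm{sym}$ for one of the candidate `equilateral' arrays and $\phi_\mathrm{sym}$ for the positive eigenfunction of $K_{V,Y_\mathrm{sym}}(\epsilon_\mathrm{sym})$ at the threshold $\epsilon_\mathrm{sym}=\inf\sigma(H_{V,Y_\mathrm{sym}})$, I would build for an arbitrary competitor $Y$ the `array of beads' trial function by transporting each restriction $\phi_{\mathrm{sym},1}$ to $B_\rho(y_i)$ with the ambient rotation in $SO(3)$ that carries the reference ball onto $B_\rho(y_i)$. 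As in Theorem~\ref{thm:circleopt}, the diagonal ($i=j$) contributions then coincide for $Y$ and $Y_\mathrm{sym}$, and by the monotonicity of $K_{V,Y}(\cdot)$ from Proposition~\ref{prop:BS} it suffices to prove that $Y_\mathrm{sym}$ is the strict global minimiser, up to Euclidean transformations, of the pair energy
\begin{equation*}
\mathcal{E}(Y) := \sum_{i\ne j} \tilde{G}_{i\kappa}(d_{ij}),\qquad d_{ij}=|y_i-y_j|,
\end{equation*}
where $\tilde{G}_{i\kappa}$ is the resolvent kernel $\frac{\e^{-\kappa r}}{4\pi r}$ smeared against the positive, even weight $\phi_\mathrm{sym}V^{1/2}$ over the reference ball, exactly the function appearing in the proof of Theorem~\ref{thm:circleopt}.

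The key point is that the five configurations listed are precisely the point sets on $S^2$ known to be \emph{universally optimal} in the sense of Cohn and Kumar, that is, to minimise $\sum_{i\ne j} f(|y_i-y_j|^2)$ simultaneously for every function $f$ completely monotonic in the squared chordal distance; this is exactly why $N\in\{2,3,4,6,12\}$ and no other values appear. I would therefore try to verify that $\tilde{G}_{i\kappa}$, viewed as a function of $t=d_{ij}^2$, lies in this class. For the point-interaction operator $-\Delta_{\alpha,Y}$ of Section~\ref{s:shrinking} the relevant off-diagonal quantity is the pure Yukawa kernel, and the Gaussian subordination identity
\begin{equation*}
\frac{\e^{-\kappa r}}{r} = \frac{1}{\sqrt{\pi}} \int_0^\infty s^{-1/2}\, \e^{-\kappa^2/4s}\, \e^{-s r^2}\,\D s
\end{equation*}
exhibits it as a positive superposition of Gaussians, each completely monotonic in $r^2$, so universal optimality applies verbatim. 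I would thus first prove the conjecture for $-\Delta_{\alpha,Y}$, where the Birman--Schwinger/Krein matrix has exactly these Yukawa entries and its top eigenvalue equals $\frac1N\mathcal{E}$ for the symmetric array and dominates it in general, and only afterwards attempt the regular well.

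The hard part is the transition from the point interaction to the genuine well $H_{V,Y}$, i.e.\ the smearing. Substituting the subordination identity into $\tilde{G}_{i\kappa}$ and using the evenness of the weight to symmetrise the cross term yields, for each $s$, a factor $A_s(d)=\e^{-sd^2}Q_s(d^2)$ with $Q_s(t)=\sum_k b_k(s)\,t^k$ and $b_k(s)\ge0$ coming from the even moments of $\cosh\big(2sd(\xi_1-\xi_1')\big)$. Such a product of a Gaussian with a power series having nonnegative coefficients is \emph{not} in general completely monotonic in $t=d^2$ for fixed $s$, so complete monotonicity of $\tilde{G}_{i\kappa}$ can only survive, if at all, through the $s$-integration against the positive weight $s^{-1/2}\e^{-\kappa^2/4s}$. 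Controlling this cancellation is, I expect, the main obstacle and the reason the statement is posed as a conjecture. A realistic route would be to treat the well as a perturbation of the point interaction via the shrinking limit of Proposition~\ref{thm:piapprox}, proving the strict inequality $\mathcal{E}(Y)>\mathcal{E}(Y_\mathrm{sym})$ for the pure Yukawa with a quantitative gap and then showing the smearing error is uniformly smaller than this gap for $\rho$ small, which would settle the conjecture at least in the strongly localised regime.

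Finally, I would stress the structural reason the sphere is genuinely harder than the circle of Theorem~\ref{thm:circleopt}. There the chords admit the single concave parametrisation $d_{ij}=2\sin\frac12\beta_{ij}$, so two applications of Jensen's inequality --- once for the convex kernel and once for the concave sine --- finish the argument elementarily. On $S^2$ the configuration space has two degrees of freedom per point and no such one-parameter concave parametrisation of the distances exists; the distance inequality must instead be extracted from the full linear-programming and Gegenbauer machinery underlying universal optimality, which is known to close only for the special values $N\in\{2,3,4,6,12\}$, matching exactly the scope of the conjecture.
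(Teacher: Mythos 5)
The statement you are addressing is labelled a \emph{conjecture} in the paper: no proof is given there, and the only supporting evidence offered is the remark at the end of Section~\ref{s:opt} that the analogous optimisation result is known in the singular (point-interaction) limit of Section~\ref{s:shrinking}, as proved in \cite{Ex19}. Measured against that, your proposal is on target but is not a proof, and you say so yourself. The part you do carry out --- reduction via the Birman--Schwinger monotonicity of Proposition~\ref{prop:BS} and an `array of beads' trial function to the minimisation of a pair energy $\sum_{i\ne j}\tilde{G}_{i\kappa}(d_{ij})$, followed by the observation that the Yukawa kernel is completely monotonic in the squared chordal distance by Gaussian subordination, so that Cohn--Kumar universal optimality singles out exactly the configurations with $N\in\{2,3,4,6,12\}$ --- is precisely the route by which the point-interaction version is established in \cite{Ex19}, and it correctly explains why the conjectured list contains these five configurations and no others, in contrast to the circle case of Theorem~\ref{thm:circleopt} where two applications of Jensen's inequality suffice.

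The genuine gap is the one you name: for the regular well the relevant kernel is $\tilde{G}_{i\kappa}$, the resolvent kernel smeared against $\phi_\mathrm{sym}V^{1/2}$ over the two balls, and there is no argument showing that this smeared kernel is still completely monotonic as a function of the squared distance between the ball centres; the cross terms in $|y_i+\xi-y_j-\xi'|^2$ destroy the clean Gaussian structure, and the linear-programming bounds behind universal optimality do not apply to it as stated. Your fallback --- proving a quantitative gap for the pure Yukawa energy and controlling the smearing error via the shrinking limit of Proposition~\ref{thm:piapprox} --- is plausible but would at best yield the claim for sufficiently small $\rho$ (and, since the norm-resolvent convergence there is stated for a fixed $Y$, one would additionally need uniformity of the error over all admissible configurations on the sphere, which is not automatic). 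So the proposal should be read as a well-informed programme that reproduces the paper's motivating evidence and isolates the open step, not as a resolution of Conjecture~\ref{conj:thomson}.
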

Both the conjectures we have stated are motivated by the fact that in the singular limit discussed in Section~\ref{s:shrinking} the corresponding optimisation results are known to be valid as demonstrated in \cite{Ex19}.

\section{Conclusions}
\label{s:conc}

The question about spectral properties of Schr\"odinger operators with potentials mixing a local order with a nontrivial geometry is rich and the current discussion just scratched the surface while suggesting various open problems. One may ask, for instance, about finer properties of the curvature-induced spectrum in relation to the geometry of the array and the shape of the single cell potential, such as the spectral counting function, the weak-deformation asymptotic behavior of the ground state, etc. At the same time, there are numerous generalizations one can think of. In addition to the asymmetry of $V$ mentioned in Remarks~\ref{rem:asymmetry} and \ref{rem:3D circle}, they include sign-changing potentials, replacing the chain $Y$ by more complicated lattices, or a quasi-periodic arrangement of the building blocks.

Another question of interest concerns the influence of a magnetic field. The two-dimensional Landau Hamiltonian perturbed by a straight periodic array of point interactions is known to have the spectrum containing the unperturbed Landau level and absolutely continuous bands between them \cite{EJK99}. The point part is not likely to persist if the singular interactions will be replaced by regular potentials but the absolute continuity is expected to be preserved. One can again ask whether some geometric perturbations will give rise to a discrete spectrum. From the point of view of applications, it is also important to find out whether a part of the absolutely continuous spectrum can survive random perturbations.

Conjectures~\ref{conj:loopopt} and \ref{conj:thomson} are not the only optimisation problems one can address in systems with finite numbers of potential wells. As long as we suppose that the balls supporting the individual potentials do not overlap, it is also natural to ask about the configurations that \emph{minimize} the ground state. Under our assumptions about the potential $V$ the answer can be easily found for a few smallest values of $N$, for larger ones -- or with additional geometric constraints -- the task may be considerably more difficult.

This list is no doubt incomplete and could go on, but we prefer to stop here and leave the continuation open.

\subsection*{Acknowledgements}

The work was supported by the Czech Science Foundation within the project 21-07129S. The author is grateful to Vladimir Lotoreichik for a useful discussion and pointing out a flaw in the first version of the proof of Theorem~\ref{thm:circleopt}, and to the referees for the comments which helped to improve the presentation.


\begin{thebibliography}{99}
\bibitem[AGHH05]{AGHH}
S.~Albeverio, F.~Gesztesy, R.~H\o egh-Krohn, H.~Holden: \emph{Solvable Models in Quantum Mechanics}, 2nd edition, AMS Chelsea Publishing, Providence, R.I., 2005.
\bibitem[BEG22]{BEG22}
J.~Behrndt, A.F.M. ter Elst, F.~Gesztesy: The generalized Birman-Schwinger principle, \emph{Trans. Am. Math. Soc.} \textbf{375} (2022), 799--845.
\bibitem[BG85]{BG85}
W.~Bulla, F.~Gesztesy: Deficiency indices and singular boundary conditions in quantum mechanics, \emph{J. Math. Phys.} \textbf{ 26} (1985), 2520-2528.
\bibitem[BGRS97]{BGRS97}
W.~Bulla, F.~Gesztesy, W.~Renger, B.~Simon: Weakly coupled bound states in quantum waveguides, \emph{Proc. Amer. Math. Soc.} \textbf{127} (1997), 1487--1495.
\bibitem[EKP20]{EKP20}
S.~Egger, J.~Kerner, K.~Pankrashkin: Discrete spectrum of Schr\"odinger operators with potentials concentrated near conical surfaces, \emph{Lett. Math. Phys.} \textbf{110} (2020), 945--968.
\bibitem[Ex01]{Ex01}
P.~Exner: Bound states of infinite curved polymer chains, \emph{Lett. Math. Phys.} \textbf{57} (2001), 87--96.
\bibitem[Ex19]{Ex19}
P.~Exner: An optimization problem for finite point interaction families, \emph{J. Phys. A: Math. Theor.} {\bf 52} (2019), 405302.
\bibitem[Ex20]{Ex20}
P.~Exner: Spectral properties of soft quantum waveguides, \emph{J. Phys. A: Math. Theor.} \textbf{53} (2020), 355302; corrigendum  {\bf 54} (2021), 099501.
\bibitem[Ex22]{Ex22}
P.~Exner: Soft quantum waveguides in three dimensions, \emph{J. Math. Phys.} \textbf{63} (2022), 042103.
\bibitem[EJK99]{EJK99}
P.~Exner, A.~Joye, H.~Kova\v{r}\'{\i}k: Edge currents in the absence of edges, \emph{Phys. Lett.} \textbf{A264} (1999), 124-130.
\bibitem[EK15]{EK15}
P.~Exner, H.~Kova\v{r}\'{\i}k: \emph{Quantum Waveguides}, Springer, Cham 2015
\bibitem[EL21]{EL21}
P.~Exner, V.~Lotoreichik: Optimization of the lowest eigenvalue of a soft quantum ring, {\em Lett. Math. Phys.} {\bf 111} (2021), 28.
\bibitem[EV23]{EV23}
P.~Exner, S.~Vugalter: Bound states in bent soft waveguides, \texttt{arXiv:2304.14776 [math.SP]}
\bibitem[DH93]{DH93}
M.~Dauge, B.~Helffer: Eigenvalue variation. I.~Neumann problem for Sturm-Liouville operators, II.~Multidimensional problems , \emph{J. Diff. Eqs} \textbf{104} (1993), 243--262, 263--297.
\bibitem[KKK21]{KKK21}
S.~Kondej, D.~Krej\v{c}i\v{r}\'{\i}k, J.~K\v{r}\'{\i}\v{z}.: Soft quantum waveguides with a explicit cut locus, \emph{J. Phys. A: Math. Theor.} \textbf{54}, 30LT01  (2021).
\bibitem[KK22]{KK22}
D.~Krej\v{c}i\v{r}\'{\i}k, J.~K\v{r}\'{\i}\v{z}: Bound states in soft quantum layers, \texttt{arXiv:2205.04919 [math-ph]}
\bibitem[Pu11]{Pu11}
A.~Pushnitski: The Birman-Schwinger principle on the essential spectrum, \emph{J. Funct. Anal.} \textbf{261} (2011), 2053--2081.
\bibitem[Qwiki]{Qwiki}
\emph{https://en.wikipedia.org/wiki/Quantum$\underline{\phantom{n}}$dot}
\bibitem[RS72-79]{RS}
M.~Reed, B.~Simon: \emph{Methods of Modern Mathematical Physics I-IV}, Academic Press, New York 1972-79.
\bibitem[Si76]{Si76}
B.~Simon: The bound state of weakly coupled Schr\"odinger ope\-rators in one and two dimensions, \emph{Ann. Phys.} \textbf{97} (1976), 279--288.
\bibitem[Th\,1904]{Th04}
J.J.~Thomson: On the structure of the atom: an investigation of the stability and periods of oscillation of a number of corpuscles arranged at equal intervals around the circumference of a circle; with application of the results to the theory of atomic structure, \emph{Phil. Mag.} \textbf{7} (1904), 237--265.
\bibitem[Twiki]{Twiki}
\emph{https://en.wikipedia.org/wiki/Thomson$\underline{\phantom{n}}$problem}
\bibitem[To88]{To88}
J.~Tolar: On a quantum mechanical d'Alembert principle, in \emph{Group Theoretical Methods in Physics}, Lecture Notes in Physics, vol.~313, Springer, Berlin 1988; pp.~268--274.
\bibitem[WT14]{WT14}
J.~Wachsmuth, S.~Teufel: \emph{Effective Hamiltonians for Constrained Quantum Systems}, Mem. AMS, vol.~230, Providence, R.I. 2014.
\bibitem[We84]{We84}
J. Weidmann: Stetige Abh\"angigkeit der Eigenwerte und Eigenfunktionen elliptischer Differentialoperatoren vom Gebiet, \emph{Math. Scand.} \textbf{54} (1984), 51--69.


\end{thebibliography}
\end{document}